\newtheorem{theorem}{Theorem}
\newtheorem{conjecture}[theorem]{Conjecture}
\newtheorem{corollary}[theorem]{Corollary}
\newtheorem{lemma}[theorem]{Lemma}
\newtheorem{observation}[theorem]{Observation}
\newtheorem{claim}{Claim}
\newproof{proof}{Proof}
\begin{document}
\begin{frontmatter}

\title{On the total neighbour sum distinguishing index of graphs with bounded maximum average degree}

\author[LaBRI]{H. Hocquard \fnref{FRgrant}}
\ead{herve.hocquard@labri.fr}

\author[agh]{J. Przyby{\l}o\corref{cor1}\fnref{grantJP,MNiSW}} 
\ead{jakubprz@agh.edu.pl}

\cortext[cor1]{Corresponding author}
\fntext[FRgrant]{Supported by CNRS-PICS Project no. 6367 ``GraphPar''.}
\fntext[grantJP]{Supported by the National Science Centre, Poland, grant no. 2014/13/B/ST1/01855.}
\fntext[MNiSW]{Partly supported by the Polish Ministry of Science and Higher Education.}

\address[LaBRI]{LaBRI (Universit\'e de Bordeaux), 351 cours de la Lib\'eration, 33405 Talence Cedex, France}
\address[agh]{AGH University of Science and Technology, al. A. Mickiewicza 30, 30-059 Krakow, Poland}

\begin{abstract}
A proper total $k$-colouring of a graph $G=(V,E)$ is an assignment $c : V \cup E\to \{1,2,\ldots,k\}$ of colours to the edges and the vertices of $G$ such that no two adjacent edges or vertices and no edge and its end-vertices are associated with the same colour. A total neighbour sum distinguishing $k$-colouring, or tnsd $k$-colouring for short, is a proper total $k$-colouring such that $\sum_{e\ni u}c(e)+c(u)\neq \sum_{e\ni v}c(e)+c(v)$ for every edge $uv$ of $G$. We denote by $\chi''_{\Sigma}(G)$ the total neighbour sum distinguishing index of $G$, which is the least integer $k$ such that a tnsd edge $k$-colouring of $G$ exists.
It has been conjectured that $\chi''_{\Sigma}(G) \leq \Delta(G) + 3$ for every graph $G$.
In this paper we confirm this conjecture
for any graph $G$ with ${\rm mad}(G)<\frac{14}{3}$ and $\Delta(G) \geq 8$.

\end{abstract}

\begin{keyword}
Total neighbour sum distinguishing index, maximum average degree, Combinatorial Nullstellensatz, discharging method.
\end{keyword}

\end{frontmatter}

\section{Introduction}
A \emph{proper total $k$-colouring} of a graph $G=(V,E)$ is an assignment $c$ of colours from the set $\{1,2,\ldots,k\}$ to the edges and the vertices of $G$ such that
adjacent edges and vertices are coloured differently and the colour of every edge is distinct from those assigned to its end-vertices.
A \emph{total neighbour sum distinguishing $k$-colouring} of $G$, or \emph{tnsd $k$-colouring} for short,
is its proper total $k$-colouring $c$ such that for every edge $uv\in E$, there is no \emph{conflict} between $u$ and $v$,
{\it i.e.}, $s(u)\neq s(v)$, where $s(w)$ is the sum of colours taken on the edges incident with $w$ and the colour of the vertex $w$ for $w\in V$.
In other words, for every vertex $w\in V$, $\displaystyle s(w) = \sum_{e \in E_w} c(e)+c(w)$, where $E_w$ is the set of edges incident with $w$ in $G$.
We denote by $\chi''_{\Sigma}(G)$ the \emph{total neighbour sum distinguishing index} of $G$, which is the least integer $k$ such that a tnsd  $k$-colouring of $G$ exists.
The roots of this branch of graph theory date back to the '80s, and the papers~\cite{ChartrandErdosOellermann,Chartrand}
on degree irregularities in graphs (and multigraphs) and the parameter \emph{irregularity strength} of a graph. For more details concerning a motivation for investigating integer graph colourings and a few crucial results on the irregularity strength see {\it e.g.}~\cite{Aigner,
Lazebnik,
Frieze,KalKarPf,Lehel,MajerskiPrzybylo2,Nierhoff}.

By definition and the requirement of properness of colourings investigated, the total neighbour sum distinguishing index of every graph $G$ is not smaller than $\Delta(G)+1$.
The following conjecture was proposed by Pil\'{s}niak and Wo\'{z}niak in~\cite{PW},
where it was also verified for a few classical graph families, including, {\it e.g.},
complete graphs, bipartite graphs and
graphs with maximum degree at most three.

\begin{conjecture}[\cite{PW}]\label{MoMa}
For every graph $G$, $\chi''_{\Sigma}(G) \leq \Delta(G) + 3$.
\end{conjecture}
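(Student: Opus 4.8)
The plan is to follow the paradigm that underlies the sparse case and to try to push it to full generality: assume $G$ is a counterexample minimising the number of edges (and, secondarily, vertices), compile a collection of \emph{reducible configurations} that such a minimal $G$ cannot contain, and then argue by a global counting (discharging) argument that every graph must contain one of them. Reducibility will mean: if $G$ contained the configuration, one could delete the colours on a small number of edges and vertices, invoke minimality to colour the rest, and then re-extend to a full tnsd $(\Delta+3)$-colouring, contradicting the choice of $G$. The re-extension step is where the algebra enters, through the Combinatorial Nullstellensatz.

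First I would clear away the trivial structure. One may assume $G$ is connected and that $\delta(G)\geq 2$, handling pendant vertices directly: the colour of a pendant edge is a single free variable ranging over $\Delta+3$ values, more than enough to avoid the at most two forbidden values (properness at the two endpoints) and the single sum-conflict across that edge. Degree-two vertices admit a similar but tighter analysis. The point of working with $\Delta+3$ rather than the trivial lower bound $\Delta+1$ is precisely to retain two further colours of slack beyond properness, which is what the sum-distinguishing constraints will consume.

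The core is the Combinatorial Nullstellensatz reduction of a configuration of small local degree. After erasing the colours on the few edges and vertices of the configuration and colouring the remainder by minimality, I would form the polynomial $P$ equal to the product, over every properness requirement and every sum-distinguishing requirement $s(u)\neq s(v)$ touching an erased element, of the corresponding difference; each such factor is linear in the free colour variables, so $P$ vanishes exactly at the colourings that fail. The degree of $P$ in any one variable is the number of factors containing it, which is controlled by the degrees of the vertices in the configuration. The key lemma to prove is that, when these degrees are small compared with $\Delta+3$, one can locate a monomial of top degree whose coefficient --- a signed, permanent-type sum --- is nonzero; the Nullstellensatz then yields the desired completion. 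Verifying nonvanishing of this coefficient while spending only the three available extra colours is the delicate technical heart, and it is here that the constant $3$ (rather than some larger constant) must be shown to suffice.

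The main obstacle, and the reason the conjecture remains open, is that this reduction only succeeds at configurations of small local degree, whereas discharging forces such configurations only when the average degree is bounded --- as in the regime $\mad(G)<\frac{14}{3}$ treated in this paper. In a dense graph no sparse configuration need occur: the number of constraints meeting a vertex grows with $\Delta$, every free variable then sits in too many factors of $P$, its degree exceeds $\Delta+2$, and the Nullstellensatz margin is exhausted. Bridging this gap seems to require replacing the configuration-by-configuration reduction with a genuinely global colouring argument --- a randomised construction controlled by the Lov\'{a}sz Local Lemma, or an entropy-compression scheme --- that spreads the sum-distinguishing constraints across the whole graph. Reconciling such a global argument with the tightness of the additive constant $3$ is the crux I do not expect to resolve by the routine, local means that settle the sparse case.
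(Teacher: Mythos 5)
You have not proved the statement, and you say so yourself in your last paragraph --- which is the correct conclusion, because the statement is Conjecture~\ref{MoMa}, which is open; the paper does not prove it either, but only the special case $\mad(G)<\frac{14}{3}$, $\Delta(G)\geq 8$ (Theorem~\ref{th:summad}). Your outline of the sparse case does match the paper's actual machinery quite faithfully: a minimal counterexample, reducible configurations handled by erasing a few colours, recolouring by minimality, and re-extending via the Combinatorial Nullstellensatz (plus, in the paper, a lemma on sums of distinct representatives, Lemma~\ref{MartheLemma}), followed by discharging with weight $d(x)-\frac{14}{3}$. You also correctly diagnose why the method stalls in general: discharging can force a low-degree configuration only under a sparsity hypothesis, and in dense graphs the polynomial's degree in each variable outgrows the $+3$ slack. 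But a correct diagnosis of the obstruction is not a proof, so the verdict is: genuine gap --- the entire dense case --- and your proposed remedies (Local Lemma, entropy compression) are named but not executed; the best known general bound along those lines is only $(1+o(1))\Delta$.

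One concrete technical error in the part you do claim as routine: the reduction of pendant vertices. You assert that recolouring a pendant edge must dodge only \emph{at most two forbidden values (properness at the two endpoints) and the single sum-conflict across that edge}. In fact the pendant edge $uv$ at support vertex $u$ is adjacent to up to $\Delta-1$ other coloured edges and two vertex colours (properness), and changing $c(uv)$ shifts $s(u)$, so it can create a conflict with \emph{every} neighbour of $u$, blocking up to $\Delta-1$ further values --- a total of order $2\Delta$, which exceeds $\Delta+3$ once $\Delta>3$. So you may not assume $\delta(G)\geq 2$. This is precisely why the paper's configuration (C1) restricts the support vertex to degree at most $\frac{k}{2}+1$ (making the count $\frac{k}{2}+2$ plus $\frac{k}{2}$, which fits inside $k+3$), keeps $1$- and $2$-vertices in the graph, and absorbs them in the discharging via the Ghost vertices method (Observation~\ref{obs1v}) rather than deleting them at the outset. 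Any serious attempt at the conjecture, even in further sparse regimes, has to confront this: low-degree vertices attached to high-degree vertices are not locally free, because the sum constraints propagate through the high-degree endpoint.
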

Note that such postulated upper bound exceeds just by one the bound implied by the famous Total Colouring Conjecture
posed by Vizing~\cite{Vizing2} in 1968 and independently by Behzad~\cite{Behzad} in 1965, and
concerning proper total colourings (without our additional requirement on sum distinction between adjacent vertices).
The both conjectures seem to be very challenging and are in general widely open. The best general result concerning the latter one~\cite{MolloyReedTotal} confirms however the Total Colouring Conjecture up to a (large) additive constant.

The best general upper bound concerning Conjecture~\ref{MoMa} implies that $\chi''_{\Sigma}(G)\leq (1+o(1))\Delta$ for every graph $G$ with maximum degree $\Delta$, see~\cite{Przybylo_asym_optim_total} and~\cite{Przybylo_asymptotic_note}. See also~\cite{total_sum_planar,LiLiuWang_sum_total_K_4,PW,Przybylo_CN_3} for partial results on this conjecture.
In particular Ding et al. first confirmed Conjecture~\ref{MoMa} for planar graphs with sufficiently large maximum degree:
\begin{theorem} [\cite{total_sum_planar}]
Any planar graph $G$ with $\Delta(G) \geq 13$ satisfies $\chi''_\Sigma(G) \leq \Delta(G)+3$.
\end{theorem}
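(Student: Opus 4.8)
The plan is to argue by contradiction via the discharging method, exploiting the fact that every planar graph satisfies $|E| \le 3|V|-6$ (equivalently $\mad(G)<6$). Suppose the theorem fails and let $G$ be a counterexample minimizing $|V(G)|+|E(G)|$; fix $k=\Delta(G)+3\geq 16$. First I would record the easy global facts: $G$ is connected and has minimum degree at least $1$, and by minimality every proper subgraph of $G$ admits a tnsd $k$-colouring. The heart of the proof is then to isolate a list of local configurations none of which can occur in $G$ (the \emph{reducible configurations}), and to show that their collective absence is incompatible with planarity.

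For the reducibility of each configuration I would use the Combinatorial Nullstellensatz, as the keyword list suggests. The template is: delete or uncolour a small set of elements $x_1,\dots,x_m\in V\cup E$ around a low-degree vertex, invoke minimality to tnsd-colour what remains, and then recolour the $x_i$ by treating their colours as variables ranging over $\{1,\dots,k\}$. I would form the polynomial $P(x_1,\dots,x_m)$ equal to the product of all properness factors $(x_i-x_j)$ and $(x_i-a)$ (for fixed coloured neighbours $a$) together with one linear sum-distinction factor $s(u)-s(v)$ for each edge $uv$ whose conflict is not yet resolved. A valid extension exists whenever $P$ has a monomial $\prod x_i^{t_i}$ of maximal total degree with nonzero coefficient and each exponent $t_i$ strictly smaller than the number of colours still admissible at $x_i$. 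Because $\Delta\geq 13$, each $x_i$ has many admissible colours, so the burden reduces to a purely algebraic coefficient computation, where the Vandermonde-type structure of the properness factors lets one extract the leading coefficient combinatorially.

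Armed with the reducible configurations, I would then set up the discharging. Assign to each vertex $v$ the initial charge $\mu(v)=d(v)-4$ and to each face $f$ the charge $\mu(f)=d(f)-4$; by Euler's formula $\sum_{x}\mu(x)=-8<0$. I would design discharging rules sending charge from high-degree vertices and large faces toward the small-degree vertices and $3$-faces that carry negative initial charge, and then prove that, once every reducible configuration has been forbidden, the final charge $\mu^*(x)$ is nonnegative for every vertex and every face, contradicting the negative total. The low-degree vertices (those with $d(v)\le 3$) and the triangles are where the negative charge concentrates and where the reducible-configuration constraints must be invoked to guarantee enough incoming charge, so these cases will require the most care.

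The main obstacle I anticipate is the interplay between the two halves: the discharging must be solvable using \emph{only} those configurations one can actually prove reducible by the Nullstellensatz, and for large $\Delta$ this forces a fairly intricate catalogue describing how vertices of degree $2$ and $3$ may attach to their neighbours. Getting the Nullstellensatz coefficient to be nonzero in the tightest configurations — where the number of sum-distinction factors is close to the number of available colours — is the delicate algebraic step, while matching that catalogue to a working discharging assignment over the full planar range $\mad(G)<6$ is the delicate combinatorial one.
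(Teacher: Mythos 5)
This statement is the theorem of Ding et al.~\cite{total_sum_planar}; the present paper only quotes it, but its own proof of Theorem~\ref{th:summad} follows exactly the architecture you describe (minimal counterexample, Combinatorial Nullstellensatz reducibility, discharging), so your skeleton is the right one. The genuine gap is that your proposal defers the entire mathematical content: you exhibit no reducible configuration, compute no coefficient, and state no discharging rule, yet for a theorem of this kind those details \emph{are} the proof. Your claim that ``$\Delta\geq 13$ means each $x_i$ has many admissible colours, so the burden reduces to a purely algebraic coefficient computation'' is where the difficulty actually lives, not where it ends: an uncoloured edge incident with a vertex of degree $\Delta$ has only about $3$ free colours out of $\Delta+3$, so the CN monomial must be chosen with exponents tailored to very small lists, and one must check a nonzero coefficient for each configuration separately (compare the explicit polynomials in the proof of Claim~\ref{claimstructure}, whose coefficients $2$, $16$, $-10$, $-6$, $5$ are found by computation, and the rules (R1)--(R3) with their full case analysis). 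Moreover, the catalogue of configurations and the rules must be designed \emph{jointly} so that every vertex and every face ends with nonnegative charge; asserting that such a matching exists is precisely the theorem, not an argument for it.

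There is also a concrete flaw in your minimality setup. You fix $k=\Delta(G)+3$ and assert that ``by minimality every proper subgraph of $G$ admits a tnsd $k$-colouring.'' This does not follow: $G$ is minimal among counterexamples to the stated theorem, whose hypothesis requires $\Delta\geq 13$, so a subgraph $H'$ obtained by deleting edges around a low-degree vertex may satisfy $\Delta(H')<13$ (deletion can destroy all maximum-degree vertices), and then neither the theorem nor the minimality of $G$ gives any colouring of $H'$ with $k$ colours. The standard repair --- the one used in this paper --- is to fix the integer $k$ first and take $H$ minimal (with respect to $|V|+|E|$) among \emph{all} graphs with $\Delta(H)\leq k$, the relevant sparseness bound, and $\chi''_\Sigma(H)>k+3$; then every smaller graph with $\Delta\leq k$ is colourable regardless of its actual maximum degree, which is exactly what the reducibility arguments consume. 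Finally, note that your observation ${\rm mad}(G)<6$ for planar graphs cannot by itself replace planarity here: vertex-only discharging of the kind in this paper reaches only ${\rm mad}<\frac{14}{3}$, so your move to vertex-and-face charges $d(x)-4$ with total $-8$ is necessary, and with it the unexamined burden of handling faces (in particular $3$-faces) in the discharging, which your outline acknowledges but does not begin to discharge.
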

This was then improved by Yang et al. in the following form.
\begin{theorem} [\cite{total_sum_planar2}]
Any planar graph $G$ satisfies $\chi''_\Sigma(G) \leq \max\{\Delta(G)+2,13\}$.
\end{theorem}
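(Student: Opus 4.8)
The plan is to argue by contradiction, fixing a counterexample $G$ to the statement that is minimal with respect to the number of edges (and then of vertices), and setting $k=\max\{\Delta(G)+2,13\}$. Thus $G$ is a connected planar graph admitting no tnsd $k$-colouring, while every planar graph with fewer edges admits one. The whole difficulty is concentrated in the regime $\Delta\ge 11$, where $k=\Delta+2$ exceeds by only one the number of colours already forced around a vertex of maximum degree; when $\Delta\le 10$ we instead have the comfortable budget $k=13\ge\Delta+3$. First I would record the elementary consequences of minimality: $G$ has minimum degree at least $2$, no bridge, and no separating structure that would let us split the colouring problem into smaller instances.

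The engine of the reducibility part is the Combinatorial Nullstellensatz, exactly as advertised in the keywords. For a candidate reducible configuration I would delete a low-degree vertex or a carefully chosen edge, invoke minimality to colour the rest with $k$ colours, and then try to recolour the deleted object. Each uncoloured element $x$ contributes a variable; each properness requirement $x\ne a$ (for an already used colour $a$) or $x\ne y$ (for two uncoloured adjacent elements) contributes a linear factor; and each sum-distinction requirement $s(u)\ne s(v)$ across an uncoloured edge contributes a factor that is again linear in the variables. Multiplying these factors yields a polynomial $P$ whose total degree I would compare against the number $k$ of available values per variable; the configuration is reducible precisely when $P$ has a monomial of maximal multidegree with nonzero coefficient, which I would establish by a direct (if tedious) coefficient computation. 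In this way I expect to forbid, among others, $2$-vertices adjacent to vertices that are not of large degree, adjacent pairs of low-degree vertices, and more generally edges $uv$ with $d(u)+d(v)$ below an explicit threshold tied to $k$.

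With the list of reducible configurations in hand, I would run a discharging argument that genuinely exploits planarity rather than mere sparsity, which is what makes the sharper bound $\Delta+2$ attainable here. Assign to each vertex $v$ the charge $d(v)-4$ and to each face $f$ the charge $\ell(f)-4$, where $\ell(f)$ is the length of $f$; by Euler's formula the total charge equals $-8$. I would then design local redistribution rules sending charge from high-degree vertices and from long faces to the deficient $2$- and $3$-vertices and to the triangular faces, and verify that after discharging every vertex and every face carries nonnegative charge. Since the reducible configurations guarantee that each deficient element has enough wealthy neighbours to be compensated, the final charge sum becomes nonnegative, contradicting the value $-8$ and hence the existence of $G$.

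The step I expect to be the main obstacle is the interaction between the tight palette and the discharging bookkeeping in the range $\Delta\ge 11$. Because only $\Delta+2$ colours are available, the reducible configurations cannot be too generous: forbidding an edge $uv$ forces $d(u)+d(v)$ to be large, so the deficient low-degree vertices must sit next to many high-degree vertices, and one has to prove that these high-degree vertices can simultaneously afford the charge demanded of them by all their poor neighbours. Balancing the Combinatorial Nullstellensatz coefficient computations — which must still succeed with the minimal palette — against a discharging scheme fine enough to close every case, in particular the borderline faces and the vertices of degree exactly $\Delta$ whose incident sums are almost forced, is where the real work lies; the $\Delta\le 10$ case, by contrast, should follow comfortably from the surplus colour and a coarser version of the same analysis.
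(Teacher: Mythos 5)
First, a point of context: the paper you are working from does not prove this statement at all --- it is quoted as an external result of Yang, Sun, Yu, Wu and Zhou (reference \cite{total_sum_planar2} in the bibliography), and the paper's own contribution (Theorem~\ref{th:summad}) concerns graphs of bounded maximum average degree, not this planar bound. So there is no internal proof to compare against; your proposal can only be judged on its own merits as a proof of the cited theorem.

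Judged that way, it has a genuine gap: it is a research plan, not a proof. Every step that would constitute the actual argument is deferred. No reducible configuration is exhibited --- you never specify which vertex or edge is deleted, never write down a polynomial, and never compute a single coefficient, so the Combinatorial Nullstellensatz is invoked only as a promise; likewise no discharging rule is stated and no case of the charge verification is carried out, so the contradiction with the total charge $-8$ is never reached. Worse, one of the few concrete claims you do make would fail: it is not an ``elementary consequence of minimality'' that $\delta(G)\geq 2$ or that $G$ is bridgeless. With only $k=\Delta(G)+2$ colours, extending a colouring of $G-v$ over a pendant edge $uv$ must avoid up to $d(u)-1$ edge colours at $u$, the colour of $u$, and up to $d(u)-1$ further colours forced by sum-distinction of $u$ from its other neighbours --- roughly $2\Delta-1$ forbidden values against a palette of $\Delta+2$, which is insufficient already for moderate $\Delta$. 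This is precisely why the present paper, in its analogous setting, must retain $1$-vertices and control them structurally via configuration (C1) (a $2^-$-vertex cannot be adjacent to a $(\frac{k}{2}+1)^-$-vertex) rather than dismiss them at the outset. Your skeleton (minimal counterexample, CN-based reducibility, vertex-and-face discharging from Euler's formula) is indeed the standard architecture in this area and is consistent in spirit with both the cited proof and the mad-based proof given in this paper, but as written it proves nothing: you yourself locate ``the real work'' in exactly the parts that are missing.
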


Let ${\rm mad}(G)=\max\left\{\frac{2|E(H)|}{|V(H)|},\;H \subseteq G\right\}$ be the \emph{maximum average degree} of a graph $G$, where $V(H)$ and $E(H)$ are the sets of vertices and edges of $H$, respectively.
This is a conventional measure of sparseness of an arbitrary graph (not necessary planar). For more details on this invariant see {\it e.g.} \cite{Coh10,Toft}.
Dong and Wang first made the link between the maximum average degree and the total neighbour sum distinguishing index.
They proved the following result.

\begin{theorem}[\cite{DongWang_mad}] \label{th:summadCite1} 
Any graph $G$ with $\Delta(G) \geq 5$ and ${\rm mad}(G)<3$ satisfies $\chi''_\Sigma(G) \leq \Delta(G)+2$.
\end{theorem}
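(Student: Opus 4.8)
The plan is to argue by contradiction via a minimal counterexample, combining the discharging method with the Combinatorial Nullstellensatz to establish reducibility. Fix $k=\Delta+2\ge 7$ and let $G$ be a graph with $\mad(G)<3$, $\Delta(G)=\Delta\ge 5$, and \emph{no} tnsd $k$-colouring, chosen with as few edges as possible. Since $\mad$ is monotone under taking subgraphs, and since all reductions below delete only vertices of small degree (leaving at least one vertex of degree $\Delta$ intact), every graph $G'$ arising from $G$ by such deletions satisfies $\mad(G')<3$ and $\Delta(G')=\Delta$, so by minimality $G'$ admits a tnsd $k$-colouring. The aim is then twofold: to show that $G$ cannot contain any of a short list of local configurations, and to show that $\mad(G)<3$ forces at least one of them to occur, a contradiction.

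First I would pin down the reducible configurations. A leaf is immediately reducible: deleting it, $k$-colouring the rest, and then colouring its pendant edge and itself leaves ample room to dodge the few forbidden colours and the single possible sum-conflict, so $\delta(G)\ge 2$. The substantive configurations concern vertices of degree $2$ together with their neighbours: for instance a vertex of degree $2$ adjacent to a vertex of degree at most $3$, and a vertex of degree $4$ or $5$ adjacent to several vertices of degree $2$. To prove each such configuration reducible, I would take a tnsd $k$-colouring of the graph with the offending edges and incident low-degree vertices uncoloured, and encode the remaining freedom as a polynomial
\[
P \;=\; \prod \bigl(c(x)-c(y)\bigr)\cdot\prod \bigl(s(u)-s(v)\bigr),
\]
where the first product runs over all pairs of adjacent uncoloured elements (enforcing properness) and the second over the edges $uv$ whose sum-distinction is affected. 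By the Combinatorial Nullstellensatz it suffices to exhibit a monomial $\prod_i c(x_i)^{\alpha_i}$ of total degree equal to $\deg P$ whose coefficient in $P$ is non-zero; each uncoloured element then having a list of more than $\alpha_i$ admissible colours (guaranteed by $k\ge 7$ and the small degrees involved), a valid completion exists.

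With these configurations excluded, I would run a discharging argument. Assign to each vertex the initial charge $\mu(v)=d(v)-3$, so that $\sum_v \mu(v)=2|E(G)|-3|V(G)|<0$ by $\mad(G)<3$. Using rules that send half-units of charge from vertices of degree at least $4$ to their neighbours of degree $2$, the absence of the reducible configurations (no $2$-vertex near a low-degree vertex, and no degree-$4$ or $5$ vertex overloaded with $2$-neighbours) forces every vertex to finish with non-negative charge. Hence $\sum_v \mu(v)\ge 0$, the desired contradiction.

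I expect the main obstacle to lie in the reducibility step rather than the discharging. The difficulty is that the sum-distinction factors $s(u)-s(v)$ couple the colour variables of several elements simultaneously, so both the target monomial and the non-vanishing of its coefficient must be chosen with care; the hardest cases are configurations with several mutually adjacent low-degree vertices, where many sum-constraints compete for few free colours and a naive factor count leaves too little slack. Balancing $\deg P$ against the available list sizes — and, when the plain coefficient vanishes, passing to a more cleverly weighted monomial or pre-fixing some colours — is where the real work will sit.
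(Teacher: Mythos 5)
You should first note a point of reference: the paper never proves Theorem~\ref{th:summadCite1} at all --- it is quoted from Dong and Wang~\cite{DongWang_mad} --- so your attempt can only be measured against the template the paper uses for its own result, Theorem~\ref{th:summad}. Your plan (minimal counterexample, reducible configurations via the Combinatorial Nullstellensatz, discharging against the ${\rm mad}$ threshold) does mirror that template, but as a proof of the present statement it has a genuine hole, and it sits exactly where you declare the step ``immediate''.

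Your leaf reduction is wrong as counted. Let $v$ be a leaf at a vertex $u$ of degree $d$, and try to extend a tnsd $(\Delta+2)$-colouring of $G-v$. Properness forbids for $c(uv)$ the $d-1$ colours of the other edges at $u$ and the colour of $u$; moreover, since $c(uv)$ is added to $s(u)$, each of the $d-1$ neighbours $w\neq v$ of $u$ forbids one further value, namely $s(w)-s(u)$. That is up to $2d-1$ forbidden values --- the conflict between $u$ and $v$ is in fact the harmless one, since $v$, having degree $1$, can be (re)coloured last --- and for $d$ close to $\Delta$ this exceeds the $\Delta+2$ available colours as soon as $\Delta\geq 4$. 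Treating $c(u)$ as a second Nullstellensatz variable does not rescue the count: the natural polynomial has degree roughly $4d-3$, while two lists of size at most $\Delta+2$ can only support a monomial of degree $2\Delta+2$. This is precisely why the paper, although it works with one extra colour ($k+3$, $k\geq 8$), never handles low-degree neighbours of large-degree vertices greedily, but through Lemma~\ref{lemma4v} and Corollary~\ref{obs2v}, whose proofs via Lemma~\ref{MartheLemma} rely on lists of size about $\alpha+\beta+1+k-d$ --- a surplus that evaporates at $\Delta+2$ colours. Since under your rules a $1$-vertex starts at charge $-2$ and receives nothing, your entire discharging rests on $\delta(G)\geq 2$, i.e.\ on exactly the unsupported step; some substitute for Lemma~\ref{lemma4v} valid with $\Delta+2$ colours (this is the real content of Dong and Wang's proof) is unavoidable.

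Two further remarks. Your minimality setup assumes $\Delta(G')=\Delta(G)$ because ``only low-degree vertices are deleted'', but deleting edges at low-degree vertices lowers the degrees of their neighbours, which may include every vertex of maximum degree; when $\Delta=5$ the reduced graph can have maximum degree $4$, outside the theorem's hypotheses, and minimality then yields nothing. The clean repair is the one the paper uses for Theorem~\ref{th:summad}: fix $k$ and take a minimal counterexample among all graphs with $\Delta\leq k$ and ${\rm mad}<3$, a class closed under your deletions (a tnsd colouring of a subgraph with smaller maximum degree still uses only colours from $\{1,\ldots,k+2\}$). On the positive side, your remaining configurations are sound: with $\Delta+2\geq 7$ colours a $2^-$-vertex can always be recoloured to dodge all of its at most six constraints (the analogue of Observation~\ref{obs3v}, which at this number of colours covers $2^-$-vertices only, not $3$-vertices), and with that in hand a $2$-vertex next to a $3^-$-vertex, a $4$-vertex with three $2$-neighbours, and a $5$-vertex with five $2$-neighbours are all reducible by elementary counting, so your charge-$(d-3)$ discharging would indeed close once the leaf case were settled.
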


This subject was intensively studied afterwards, and the following improvement has been announced recently (but no proof of such a supposed fact
was published thus far).
\begin{theorem}[\cite{Qiu_Wang_Liu_Xu}] \label{th:summadCite2}
Any graph $G$ with $\Delta(G) \geq 8$ and ${\rm mad}(G)< \frac92$ satisfies $\chi''_\Sigma(G) \leq \Delta(G)+3$.
\end{theorem}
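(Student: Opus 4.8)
The plan is to argue by contradiction through a minimal counterexample, using the two-part machinery that underlies all the cited $\mad$-type results: a catalogue of \emph{reducible configurations} established with the Combinatorial Nullstellensatz, together with a \emph{discharging} argument. So suppose the statement fails and let $G$ be a counterexample minimising $|E(G)|$ (breaking ties by $|V(G)|$). Write $\Delta=\Delta(G)\geq 8$ and put $k=\Delta+3\geq 11$; thus $\mad(G)<\frac92$, $G$ admits no tnsd $k$-colouring, yet every graph with fewer edges satisfying the hypotheses does (and the deletions below will be confined to the neighbourhoods of light vertices, so that a vertex of degree $\Delta$ survives and minimality genuinely applies to the reduced graph). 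We may assume $G$ is connected, and since a leaf is trivially coloured we may assume $\delta(G)\geq 2$. The point that makes the local extensions feasible is the surplus of colours: properness alone needs only $\Delta+1$, whereas we are allowed $k\geq\Delta+3$.

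First I would assemble the forbidden configurations. Because $\mad(G)<\frac92$ the only vertices that will be in deficit have degree $2$, $3$ or $4$, so every configuration concerns such light vertices and the degrees of their neighbours: for instance that a $2$-vertex cannot have a neighbour of too small a degree, that certain adjacencies between light vertices are impossible, and that a vertex cannot carry too many $2$-neighbours relative to its degree. The uniform scheme for each claim is to delete or contract a suitable edge or light vertex to obtain $G'$ with $|E(G')|<|E(G)|$ and $\mad(G')\leq\mad(G)$, colour $G'$ by minimality, then uncolour a small cluster of elements around the deleted region and extend. For the extension I would treat the colours of the uncoloured edges and vertices as variables $x_1,\dots,x_m$ and form the polynomial
\[
P=\prod_{\{i,j\}\ \mathrm{adjacent}}(x_i-x_j)\cdot\prod_{uv\ \mathrm{affected}}\bigl(s(u)-s(v)\bigr),
\]
where the first product encodes properness and the second, in which each $s(\cdot)$ is a linear form in the $x_i$, encodes sum-distinction at every edge whose endpoint sum was disturbed. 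If the monomial $\prod x_i^{t_i}$ with $\deg P=\sum t_i$ and each $t_i$ strictly less than the number $|L_i|$ of colours still available for element $i$ has a nonzero coefficient, then by the Combinatorial Nullstellensatz a legal assignment exists, contradicting the choice of $G$.

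The discharging phase then produces the global contradiction. Assign to each vertex the charge $\mu(v)=2d(v)-9$; since $\mad(G)<\frac92$ gives $4|E(G)|<9|V(G)|$, the total charge $\sum_v\mu(v)=4|E(G)|-9|V(G)|$ is strictly negative, while vertices of degree at least $5$ start with positive charge and only the degrees $2,3,4$ are in deficit (namely $-5,-3,-1$). I would design rules transferring charge from high-degree vertices to their light neighbours, and then, invoking the absence of every reducible configuration, verify that each vertex finishes with $\mu^\ast(v)\geq 0$. Since discharging preserves the total, this yields $0\leq\sum_v\mu^\ast(v)=\sum_v\mu(v)<0$, a contradiction, completing the proof.

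I expect the real difficulty to lie in the Combinatorial Nullstellensatz computations rather than in the discharging bookkeeping. Unlike proper colouring, where every factor is a difference $x_i-x_j$ of single variables, each sum-distinction factor $s(u)-s(v)$ is a linear form in several variables, and crucially the colour of one uncoloured edge $uv$ enters both $s(u)$ and $s(v)$, so the factors are strongly coupled; showing that the target coefficient does not vanish therefore demands a careful, often explicit, analysis and limits how large a configuration one can afford to reduce. The genuine art — and the place where the constant $\frac92$ (and, in the stronger theorem of this paper, $\frac{14}{3}$) enters — is to select a family of configurations that is at once reducible by this polynomial method and rich enough for the discharging to close. Since $\frac{14}{3}>\frac92$, the main result of the present paper subsumes this statement, so the same strategy applies here with a strictly smaller catalogue of reducible configurations.
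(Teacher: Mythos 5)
Your proposal is a strategy outline rather than a proof, and the two concrete commitments it does make would both fail. On the first point: everything that constitutes the actual mathematical content is deferred — you never specify the catalogue of reducible configurations, never exhibit a single polynomial together with a monomial whose coefficient is shown to be non-zero, and never state discharging rules or carry out the case analysis. For results of this kind the general scheme (minimal counterexample, Combinatorial Nullstellensatz, discharging) is standard and well known; the proof \emph{is} the omitted content. Compare with what the paper needs even for the route it takes: eight explicit configurations (C1)--(C8), Lemma~\ref{lemma4v} and Corollary~\ref{obs2v}, computer-verified coefficients, and a full vertex-by-vertex discharging verification. (It is also worth stressing that the paper never proves the $\frac92$ statement directly: it is a cited, announced result, and the paper's own ``proof'' of it is the one-line deduction from the stronger Theorem~\ref{th:summad}, since ${\rm mad}(G)<\frac92<\frac{14}{3}$ — exactly the remark in your last paragraph, which is the only complete argument in your proposal, but which in a blind setting presupposes the paper's main theorem.)

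On the second point, two steps you do commit to are defective. (i) Your minimality setup requires the reduced graph $G'$ to satisfy $\Delta(G')\geq 8$, and your safeguard — that deletions are confined to neighbourhoods of light vertices so that a $\Delta$-vertex survives — is untenable: the configurations needed to close any such discharging are precisely high-degree vertices with many light neighbours (cf.\ (C6)--(C8)), and reducing them means deleting several edges at such a vertex, which may be the unique vertex of maximum degree; then the induction hypothesis simply does not apply to $G'$. The paper avoids this by fixing $k\geq 8$ and taking a minimal $H$ with $\Delta(H)\leq k$, ${\rm mad}(H)<\frac{14}{3}$ and $\chi''_\Sigma(H)>k+3$, a class closed under edge deletion. (ii) The reduction to $\delta(G)\geq 2$ is false: a pendant vertex $v$ whose neighbour $u$ has degree $\Delta$ is not ``trivially coloured'' — the pendant edge must avoid up to $\Delta$ colours for properness at $u$ and up to $\Delta-1$ further values to keep $u$ distinguished from its other neighbours, i.e.\ up to $2\Delta-1$ constraints on a single colour against $\Delta+3$ available colours, and no polynomial trick helps since all these constraints involve only the one variable $c(uv)$. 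This is exactly why the paper retains $1$-vertices, forbids them only next to $\left(\frac{k}{2}+1\right)^-$-vertices (configuration (C1)), and, because such vertices then inevitably finish with negative charge, replaces your ``every vertex ends non-negative'' scheme by the ghost-vertices device of Observation~\ref{obs1v}. Your all-non-negative plan runs into the same wall already at $2$-vertices: with your charge $2d(v)-9$, a degree-$8$ vertex with three $2^-$-neighbours (which the natural configuration list, e.g.\ Corollary~\ref{obs2v}, does not exclude) holds charge $7$ but owes $\frac{15}{2}$. So some analogue of the ghost-vertex argument, or a strictly stronger configuration catalogue, is unavoidable even at $\frac92$.
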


In this paper, we prove a stronger statement than
the above:

\begin{theorem}\label{th:summad}
Any graph $G$ with $\Delta(G) \geq 8$ and ${\rm mad}(G)<\frac{14}{3}$ satisfies $\chi''_\Sigma(G) \leq \Delta(G)+3$.
\end{theorem}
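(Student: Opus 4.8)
The plan is to argue by contradiction, combining the minimal counterexample technique, the Combinatorial Nullstellensatz (as an extension tool), and the discharging method. Fix $\Delta\geq 8$ and set $k=\Delta+3\geq 11$. Among all graphs with maximum degree at most $\Delta$, with $\mad < \frac{14}{3}$, and admitting no tnsd $k$-colouring, I would choose $G$ with the fewest edges, and I may assume $G$ is connected. Since deleting vertices or edges can only decrease the maximum degree and cannot increase the maximum average degree (which is monotone under taking subgraphs), any graph obtained from $G$ by such deletions has strictly fewer edges and therefore admits a tnsd $k$-colouring by minimality. This is the mechanism underlying every reducibility argument.

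The engine for reducibility is a uniform local extension scheme. Given a tnsd $k$-colouring of a subgraph of $G$ obtained by removing a bounded local configuration, leaving a small set $U$ of uncoloured edges and vertices, I want to extend the colouring to all of $G$. For each element of $U$ I bound the number of forbidden colours: those producing an adjacency conflict (properness) and those equalising two neighbouring sums $s$. I then encode the admissible extensions as the non-vanishing of a polynomial $P$ formed as a product of linear factors $(x_i-x_j)$ over pairs of adjacent uncoloured edges together with factors of $(s(u)-s(v))$-type along the edges meeting $U$. By the Combinatorial Nullstellensatz it suffices to produce a monomial $\prod_i x_i^{t_i}$ with nonzero coefficient in $P$ such that each $t_i$ is strictly less than the number of colours still available for the $i$-th element. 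Because $k\geq\Delta+3$ the residual lists are long, so the degree budget is comfortable for small configurations; the delicate point is always to exhibit the right monomial and certify that its coefficient does not vanish.

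Next I would isolate a list of reducible configurations. Since $\frac{14}{3}>4$, the troublesome vertices are precisely those of degree at most $4$, and the configurations describe how such vertices may attach to $G$: that $G$ has minimum degree at least $2$; that a $2$-vertex cannot have both neighbours small, so every $2$-vertex has a neighbour of large degree; that adjacent small vertices are restricted; and, crucially, that a vertex of a given degree cannot support too many small (in particular degree-$2$) neighbours. Each configuration is dismissed by removing the small vertices and their incident edges, colouring the reduced graph by minimality, and extending by the Combinatorial Nullstellensatz as above. I expect this to be the main obstacle: because the target $\frac{14}{3}$ exceeds the previously handled $\frac{9}{2}$ of Theorem~\ref{th:summadCite2}, the configurations one must reduce are larger and the degree budget in the Nullstellensatz is correspondingly tighter, so producing a provably nonzero coefficient (rather than merely bounding degrees) is where the real work lies, and it must be coordinated exactly with the discharging weights.

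Finally I would run a discharging argument. Assign to each vertex the initial charge $\mu(v)=d(v)-\frac{14}{3}$, so that $\sum_{v}\mu(v)=2|E(G)|-\frac{14}{3}|V(G)|<0$ by the hypothesis $\mad(G)<\frac{14}{3}$. Vertices of degree at least $5$ begin with positive charge and those of degree at most $4$ with negative charge, so the redistribution rules send charge from large vertices to their small neighbours in amounts calibrated to the reducible configurations; for instance a $2$-vertex, which the structural lemmas force to have large neighbours, collects just enough from them to reach $0$. Using the forbidden configurations to control the local structure around each vertex, I would verify that every vertex ends with charge at least $0$, whence $\sum_v\mu(v)\geq 0$, contradicting the strict inequality above and completing the proof.
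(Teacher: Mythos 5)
Your overall architecture---minimal counterexample, Combinatorial Nullstellensatz reducibility for small configurations, discharging from the initial charge $d(v)-\frac{14}{3}$---is exactly the paper's, but your discharging endgame has a genuine gap, in two related places. First, you assert that the structural phase yields $\delta(G)\geq 2$. This is not achievable by the extension technique: to recolour a pendant edge $uv$ at a vertex $u$ of degree $\Delta$ one may have to avoid up to $\Delta-1$ colours for properness at $u$, the colour of $u$, and up to $\Delta-1$ further colours to keep $u$ sum-distinguished from its other neighbours, i.e.\ up to $2\Delta-1>\Delta+3$ constraints; accordingly the paper never excludes $1$-vertices, and its configuration (C1) only forces the neighbour of a $2^-$-vertex to have large degree. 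Second, and more fundamentally, the plan of bringing \emph{every} vertex to non-negative final charge is arithmetically impossible given the structural information that is actually provable. A $2$-vertex starts at $-\frac{8}{3}$ and would need $\frac{4}{3}$ from each of its two neighbours, and a $1$-vertex would need $\frac{11}{3}$; but Corollary~\ref{obs2v} allows a vertex of degree $d\geq 7$ to have $d-5$ such neighbours, and this bound is essentially sharp (e.g.\ for $\Delta=k=8$, a degree-$8$ vertex with three $2$-neighbours is not reducible by these methods: after freeing the three edges, Lemma~\ref{MartheLemma} guarantees only about $4$ distinct sums at $v$, while $5$ coloured neighbours must be avoided). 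A vertex with $d-5$ such neighbours then ends with charge at most $d-\frac{14}{3}-(d-5)\cdot\frac{4}{3}=\frac{6-d}{3}<0$ for $d\geq 7$, so some vertex necessarily finishes negative and no contradiction with $\sum_v \mu(v)<0$ follows.

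The missing idea is the ``ghost vertices method'' of Observation~\ref{obs1v}, which is how the paper closes this loop. Partition $V(H)$ into $V_1$ (degree $\geq 3$) and $V_2$ (degree $\leq 2$), and demand only that vertices of $V_1$ end non-negative, while each $u\in V_2$ ends with charge at least $d(u)-\frac{14}{3}+d_{V_1}(u)$, i.e.\ receives merely $1$ per high-degree neighbour (rule (R1)); $1$- and $2$-vertices then legitimately finish at $-\frac{8}{3}$ and $-\frac{2}{3}$. The contradiction is not $\sum_v\mu(v)\geq 0$ over all of $H$, but ${\rm mad}(H)\geq{\rm mad}(H[V_1])\geq\frac{14}{3}$, obtained by transferring the weight count to the subgraph induced by $V_1$. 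With this weaker demand on $V_2$, the paper's eight forbidden configurations and Corollary~\ref{obs2v} suffice; your version would instead require structural lemmas roughly of the strength $n_{2^-}(v)\leq\frac{3d(v)-14}{4}$ together with the exclusion of pendant vertices, both of which are out of reach of the Nullstellensatz extension machinery. Your reducibility engine itself (the polynomial encoding and degree-budget considerations) does match the paper's Claim~\ref{claimstructure} in spirit; it is only the final counting that must be reorganized along the lines of Observation~\ref{obs1v}.
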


Recall that the girth of a graph is the length of a shortest cycle in it.
As every planar graph with girth $g$ satisfies ${\rm mad}(G) < \frac{2g}{g-2}$, the following corollary can be easily derived from Theorem~\ref{th:summad}:

\begin{corollary}
Any triangle-free planar graph $G$ with $\Delta(G) \geq 8$ satisfies $\chi''_\Sigma(G) \leq \Delta(G)+3$.
\end{corollary}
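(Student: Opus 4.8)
The plan is to argue by contradiction, studying a minimal counterexample. Let $G$ be a graph with $\Delta(G)\geq 8$ and $\mad(G)<\frac{14}{3}$ that admits no tnsd $(\Delta(G)+3)$-colouring and minimises $|E(G)|$ among all such graphs, and set $k=\Delta(G)+3\geq 11$. Every proper subgraph $H$ of $G$ satisfies $\mad(H)\leq\mad(G)<\frac{14}{3}$ and $\Delta(H)\leq\Delta(G)$, so by the minimality of $G$ each relevant $H$ admits a tnsd $k$-colouring (a palette of size $k$ is always available to $H$); reductions will be chosen so as to keep a vertex of degree $\Delta(G)$ intact where this matters. The proof then has the usual two parts: a list of \emph{reducible configurations} that cannot occur in $G$, and a \emph{discharging} argument showing that a graph free of all of them must satisfy $\mad\geq\frac{14}{3}$.

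For the reducibility part I would first show $\delta(G)\geq 2$, and then rule out local configurations built around vertices of degree $2$, $3$ and $4$ (adjacent low-degree vertices, short threads of degree-$2$ vertices, low-degree vertices sharing a high-degree neighbour, and so on). Each lemma follows the same template: uncolour a few edges and possibly vertices of the configuration, invoke minimality to tnsd-colour the rest with the $k$ colours, and extend. Writing the colours to be reinstated as variables $x_1,\dots,x_t$, properness imposes factors $(x_i-x_j)$ and $(x_i-a)$, while each potential conflict $s(u)=s(w)$ across a relevant edge becomes a factor that is affine-linear in the $x_i$, since each $s(\cdot)$ is a sum of incident edge colours plus a vertex colour. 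When the number of values forbidden at each edge is smaller than its remaining palette a greedy count suffices; when the sum constraints couple several uncoloured edges and a greedy count fails, I would form the product $P(x_1,\dots,x_t)$ of all these factors and apply the Combinatorial Nullstellensatz, exhibiting a monomial $x_1^{d_1}\cdots x_t^{d_t}$ with nonzero coefficient and with each $d_i$ below the size of the admissible set for $x_i$, which yields the required extension. The free vertex colours $x_0,\dots$ provide extra slack useful in the tightest cases.

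For the discharging part I would assign each vertex the initial charge $\mu(v)=d(v)-\frac{14}{3}$, so that $\sum_v\mu(v)=2|E(G)|-\frac{14}{3}|V(G)|<0$ by the hypothesis on $\mad(G)$. Vertices of degree at least $5$ start with nonnegative charge ($\frac13$ already at degree $5$), whereas degrees $4,3,2$ carry deficits $\frac23,\frac53,\frac83$ respectively. I would then design rules sending charge from vertices of degree $\geq 5$ to their low-degree neighbours, with the amounts calibrated to the reducible configurations: the role of the reducibility lemmas is precisely to guarantee that every low-degree vertex has enough neighbours of sufficiently high degree to absorb its deficit. Showing that after discharging every vertex has charge $\geq 0$ then contradicts $\sum_v\mu(v)<0$ and finishes the proof.

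The main obstacle, as usual in this line of work, lies in the reducibility of the most intricate configurations rather than in the discharging. Because the sum-distinguishing constraints are genuinely global — recolouring one edge shifts $s(\cdot)$ across a whole neighbourhood — the coupled constraints defeat naive counting, and the crux is the Combinatorial Nullstellensatz step: one must decide which colours to leave free and then prove that the associated monomial coefficient is nonzero, which typically requires careful bookkeeping (or a small determinantal or combinatorial identity) to control cancellation. The design is moreover iterative: the list of reducible configurations and the discharging rules have to be developed together, enlarging the forbidden list until the charge of every vertex in a configuration-free graph can be driven to $\geq 0$ under the bound $\mad(G)<\frac{14}{3}$.
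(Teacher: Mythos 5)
Your proposal never actually engages with the statement at hand: the corollary concerns \emph{triangle-free planar} graphs, yet neither planarity nor triangle-freeness appears anywhere in your argument. What you outline is instead a proof of the paper's main result, Theorem~\ref{th:summad} (graphs with $\Delta\geq 8$ and ${\rm mad}<\frac{14}{3}$), which at this point in the paper is already available to be cited. The entire content of the corollary's proof is the bridge you omit: by Euler's formula, every planar graph with girth $g$ satisfies ${\rm mad}(G)<\frac{2g}{g-2}$ (and this bound applies to all its subgraphs, which are again planar with girth at least $g$); a triangle-free planar graph has girth at least $4$, hence ${\rm mad}(G)<4<\frac{14}{3}$, and Theorem~\ref{th:summad} immediately gives $\chi''_\Sigma(G)\leq\Delta(G)+3$. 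Without this one-line reduction, your text proves nothing about triangle-free planar graphs, however successful the rest of it might be.

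Even read as a blind re-derivation of the main theorem, the proposal is a strategy outline rather than a proof: no reducible configuration is specified, no discharging rule is written down, and no Combinatorial Nullstellensatz coefficient is computed --- exactly the places where all the work lies. There is also a concrete flaw in your minimality setup. You take $G$ edge-minimal among graphs with $\Delta(G)\geq 8$, ${\rm mad}(G)<\frac{14}{3}$ and no tnsd $(\Delta(G)+3)$-colouring; but a proper subgraph $H$ may have $\Delta(H)<8$, in which case it lies outside this class and minimality tells you nothing about its colourability. Your patch of ``keeping a vertex of degree $\Delta(G)$ intact where this matters'' fails precisely for configurations centred at a maximum-degree vertex --- compare the paper's configuration (C8), where almost all edges at a $\Delta$-vertex are deleted, so that the subgraph may no longer contain any vertex of degree $\Delta(G)$, or even of degree $8$. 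The paper avoids this by fixing an integer $k\geq 8$ and taking the minimal counterexample among graphs with $\Delta\leq k$ and ${\rm mad}<\frac{14}{3}$ for the bound $k+3$: that class is closed under all the deletions used in the reductions, so minimality always applies.
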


\section{Proof of Theorem~\ref{th:summad}}

\subsection{Preliminaries}

Fix an integer $k\geq 8$.
In the following, $n_i(G)$ denotes the number of vertices of degree $i$ in a graph $G$ (and similarly for $n_{i^+}(G)$ with ``at least $i$'' and for $n_{i^-}(G)$ with ``at most $i$'').
We say a graph $G$ is \emph{smaller} than a graph $H$, $G \prec H$ if $|E(G)|+|V(G)|<|E(H)|+|V(H)|$.
 We say a graph is \emph{minimal} for a property when no smaller graph verifies it.
We shall also call any vertex of degree $d$ ($\geq d$, $\leq d$) in a given graph a \emph{$d$-vertex} (\emph{$d^+$-vertex}, \emph{$d^-$-vertex}, resp.) of this graph.
The same nomenclature shall be used for neighbours as well.\\

\subsection{Structural properties of H}

Suppose that $H$ is a minimal graph with maximum degree $\Delta\leq k$,
${\rm mad}(H)<\frac{14}{3}$ and $\chi''_\Sigma(H)>k+3$ (hence $H$ is connected and $\delta(H)\geq 1$).
In the remaining part of the paper we argument that in fact $H$ cannot exist, {\it i.e.} that there exists a tnsd  $(k+3)$-colouring of $H$,  and thus prove Theorem~\ref{th:summad}.

In this subsection we exhibit some structural properties of $H$.

The following lemma shall be very useful to this end.
Its proof was inspired by the research from~\cite{BP14}.
The same result but in the case of lists of integers can also be derived from~\cite{Alon}.

\begin{lemma}[\cite{HocquardPrzybylo1}]\label{MartheLemma}
For any finite sets $L_1,\ldots,L_t$ of real numbers with $|L_i|\geq t$ for $i=1,\ldots,t$,
the set $\{x_1+\ldots+x_t:x_1\in L_1,\ldots,x_t\in L_t; x_i\neq x_j~{\rm for}~i\neq j\}$ contains
at least $\sum_{i=1}^t|L_i|-t^2+1$ distinct elements.
\end{lemma}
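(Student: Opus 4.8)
The plan is to apply Alon's Combinatorial Nullstellensatz over $\mathbb{R}$ to a polynomial built from the set of realizable sums. Write $m_i=|L_i|$ and let
\[
S=\left\{x_1+\ldots+x_t:x_i\in L_i,\ x_i\neq x_j\ \mathrm{for}\ i\neq j\right\}
\]
be the set whose cardinality $\sigma:=|S|$ we wish to bound from below, arguing by contradiction from the assumption $\sigma\leq\sum_{i=1}^{t}m_i-t^2$. First I would introduce
\[
P(x_1,\ldots,x_t)=\prod_{s\in S}\left(\sum_{i=1}^{t}x_i-s\right)\cdot\prod_{1\leq i<j\leq t}(x_i-x_j),
\]
a polynomial of total degree $\sigma+\frac{t(t-1)}{2}$. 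It vanishes at every point of the grid $L_1\times\cdots\times L_t$: if two coordinates coincide the Vandermonde factor dies, while if they are pairwise distinct their sum lies in $S$ and annihilates one factor of the first product. Everything then reduces to exhibiting a top-degree monomial $\prod_i x_i^{d_i}$ with $d_i\leq m_i-1$ whose coefficient in $P$ is nonzero, for then the Combinatorial Nullstellensatz would produce a non-vanishing point of $P$ on the grid, a contradiction.

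The heart of the argument is the evaluation of the coefficient of $\prod_i x_i^{d_i}$ in the leading homogeneous part $\left(\sum_i x_i\right)^{\sigma}\prod_{i<j}(x_i-x_j)$, and I expect this to be the main obstacle. Expanding the Vandermonde factor as $\sum_{\pi}\mathrm{sgn}(\pi)\prod_i x_i^{\,t-\pi(i)}$ and the power through the multinomial theorem, this coefficient equals $\sigma!\,\det\left(1/(d_i-t+j)!\right)_{i,j=1}^{t}$. The plan is to evaluate this determinant by factoring $1/d_i!$ out of row $i$ and applying unitriangular column operations that replace the monic polynomials $\prod_{\ell=j+1}^{t}(d_i-t+\ell)$ by pure powers $(d_i-t)^{\,t-j}$, reducing it to a Vandermonde determinant and yielding the clean closed form
\[
\sigma!\,\frac{\prod_{1\leq i<j\leq t}(d_i-d_j)}{\prod_{i=1}^{t}d_i!},
\]
which is nonzero precisely when $d_1,\ldots,d_t$ are pairwise distinct nonnegative integers.

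It then remains to find pairwise distinct integers $0\leq d_i\leq m_i-1$ with $\sum_i d_i=\sigma+\frac{t(t-1)}{2}$, and this is exactly where the hypothesis $m_i\geq t$ and the precise constant in the bound enter. The least attainable such sum is $\frac{t(t-1)}{2}$, realized by $0,1,\ldots,t-1$ (available since each $m_i-1\geq t-1$), and I would verify that the greatest attainable sum is at least $\sum_i m_i-\frac{t(t+1)}{2}$, with equality when all $m_i$ are equal --- the arithmetic-progression configuration $L_i=\{1,\ldots,m\}$ that makes the whole bound tight. Since the contradiction hypothesis rearranges to $\sigma+\frac{t(t-1)}{2}\leq\sum_i m_i-\frac{t(t+1)}{2}$, and since the attainable sums fill a contiguous integer range (any feasible choice below the maximum can be increased by one unit), the desired exponent vector exists. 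Feeding it back into the Combinatorial Nullstellensatz contradicts the vanishing of $P$ on the grid, forcing $\sigma\geq\sum_i m_i-t^2+1$. The delicate points are thus the determinant evaluation and the bookkeeping confirming that the all-equal configuration is what renders the estimate sharp.
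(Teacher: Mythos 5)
Note first that this paper contains no proof of the lemma at all: it is imported from \cite{HocquardPrzybylo1}, whose proof is an elementary one inspired by \cite{BP14}, and the authors explicitly remark that the integer-list case ``can also be derived from~\cite{Alon}''. Your proposal essentially carries out that Combinatorial Nullstellensatz derivation directly, and does so over $\mathbb{R}$, which is legitimate since the Nullstellensatz holds over any field and your target coefficient is a nonzero rational. So this is a genuinely different route from the cited source. The core of it is sound: the polynomial $P$ does vanish on the grid under the contradiction hypothesis, only the top homogeneous part $\left(\sum_i x_i\right)^{\sigma}\prod_{i<j}(x_i-x_j)$ contributes to a monomial of total degree $\sigma+\frac{t(t-1)}{2}$, and the Alon--Nathanson--Ruzsa determinant evaluation you sketch (factor $1/d_i!$ out of each row, reduce falling factorials to pure powers by unitriangular column operations) correctly gives the coefficient $\sigma!\,\prod_{i<j}(d_i-d_j)\big/\prod_i d_i!$, nonzero exactly when the $d_i$ are pairwise distinct. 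Your bounds on the minimum and maximum attainable exponent sums, and the arithmetic showing the contradiction hypothesis places the target degree between them, are also correct. What the polynomial route buys is a short conceptual proof riding on a standard coefficient identity; what the elementary route of \cite{HocquardPrzybylo1} buys is a self-contained argument with no algebraic machinery, valid for reals with no extra care needed.

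One step, however, is false as you state it: the claim that the attainable sums form a contiguous range because ``any feasible choice below the maximum can be increased by one unit''. Take $t=2$, $m_1=2$, $m_2=3$ and the feasible vector $(d_1,d_2)=(1,0)$ of sum $1$, well below the maximum $3$: you cannot raise $d_1$ (it is at its cap $m_1-1$) and you cannot raise $d_2$ (it would collide with $d_1$), so no single-coordinate increment exists. The interval property you need is nevertheless true, but proving it requires a canonicalization: by Hall's theorem (or a greedy check), a set of values $v_1<\cdots<v_t$ is assignable to the coordinates if and only if $v_i\leq m_{(i)}-1$ for all $i$, where $m_{(1)}\leq\cdots\leq m_{(t)}$ are the capacities in nondecreasing order; and for such sorted chains, taking the largest index $i$ at which $v_i$ falls strictly below the greedy-from-the-top maximum chain, one checks that $v_i$ can be incremented by $1$ while preserving both the strict increase and the cap. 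In the example above this corresponds to passing from $(1,0)$ to the rearranged $(0,2)$ rather than incrementing a coordinate in place. With that repair your argument is complete; without it, the existence of the required exponent vector $(d_1,\ldots,d_t)$ is not justified.
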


\begin{observation}\label{obs3v}
Every $3^-$-vertex $v$ in $H$ can be recoloured (or coloured if it has no colour assigned) so that it has a different colour than its adjacent vertices and incident edges, and so that $v$ is not in conflict with any of its neighbours.
\end{observation}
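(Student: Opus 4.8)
The plan is to turn this into a simple counting argument: bound the number of colours forbidden for $v$ and show this number is strictly less than the $k+3$ available colours. Write $d\le 3$ for the degree of $v$, and let $C=\sum_{e\in E_v}c(e)$ be the (fixed) sum of the colours already assigned to the edges incident with $v$. The key preliminary observation is that recolouring $v$ alone changes only $s(v)=c(v)+C$, while every $s(u)$ with $u\neq v$ stays unchanged, since the colours of all edges at each such neighbour $u$ (including the edge $uv$) are left untouched.

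There are three sources of forbidden colours for $v$. First, properness with respect to the incident edges rules out the colours $c(e)$ for $e\in E_v$, at most $d$ values. Second, properness with respect to the adjacent vertices rules out the colours $c(u)$ of the (already coloured) neighbours $u$ of $v$, again at most $d$ values. Third, to avoid a conflict with a neighbour $u$ we need $s(v)=c(v)+C\neq s(u)$, that is $c(v)\neq s(u)-C$; by the preliminary observation each neighbour $u$ contributes here a single fixed forbidden value, so at most $d$ values in total.

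Hence at most $3d\le 9$ colours are forbidden for $v$, whereas $k+3\ge 11$ colours are available by the hypothesis $k\ge 8$. Since $9<11$, at least one admissible colour remains; assigning it to $v$ gives a proper recolouring in which $v$ is in conflict with none of its neighbours, as required. The argument is essentially just this count, and the only step warranting any care is the remark that the sums at the neighbours of $v$ do not move under the recolouring, which is exactly what guarantees that the third group of constraints excludes only one value per neighbour rather than a moving target.
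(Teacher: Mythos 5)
Your proof is correct and follows essentially the same route as the paper: the paper's own argument is exactly this count, observing that at most $9$ colours (at most $3$ from incident edges, $3$ from adjacent vertices, and $3$ from the sum constraints, one per neighbour) can be blocked while $k+3\geq 11$ colours are available. Your elaboration of why each neighbour forbids only a single fixed value (the sums at the neighbours are unaffected by recolouring $v$) is the right justification for the step the paper leaves implicit.
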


\begin{proof}
This follows directly by the fact we have $k+3\geq 11$ colours available, while at most $9$ of them might be blocked by the requirements from the thesis.
\end{proof}

\begin{lemma}\label{lemma4v}
For every vertex $v \in V(H)$, 
$n_{4^+}(v) \ge n_{2^-}(v)+1+ n_{3^-}(v)\times(k - d(v))$.
\end{lemma}

\begin{proof}
Suppose on the contrary that $v\in V(H)$ with $d(v)=d\geq 1$ is adjacent to $\alpha$ $2^-$-vertices $u_1,\dots,u_\alpha$, $\beta$ $3$-vertices $w_1,\dots,w_\beta$ and to $\gamma$ $4^+$-vertices where $\gamma < \alpha+1+(\alpha+\beta)(k-d)$, hence $\alpha+\beta\geq 1$. 
Colour $H'=H-\{vu_1,\ldots,vu_\alpha,vw_1,\ldots,vw_\beta\}$ by minimality ({\it i.e.} fix any tnsd  $(k+3)$-colouring of $H'$, which must exist due to the fact that $H$ is our minimal counterexample, while $H' \prec H$, $\Delta(H')\leq k$ and ${\rm mad}(H')<\frac{14}{3}$) and uncolour $u_i$ and $w_j$ for all $i \in \{1,\ldots,\alpha\}$ and $j \in \{1,\ldots,\beta\}$. Let $L_i$ and $L'_j$, with $i \in \{1,\ldots,\alpha\}$ and $j \in \{1,\ldots,\beta\}$ be the sets of available colours respectively for the edges $vu_i$ and $vw_j$ ({\it i.e.} those colours in $\{1,\ldots,k+3\}$ not used by their adjacent edges in $H$ or $v$).
Note that
$|L_1|,\dots,|L_\alpha| \ge \alpha + \beta + 1 + k -d$ and $|L'_1|,\dots,|L'_\beta| \ge \alpha + \beta+ k -d$.
By Lemma~\ref{MartheLemma}, we may extend this colouring to a (partial) proper colouring of $H$ in different ways, obtaining at least $\alpha(\alpha+\beta)+\alpha+\alpha(k-d)+\beta(\alpha+\beta)+\beta(k-d)-(\alpha+\beta)^2+1=\alpha+1+(\alpha+\beta)(k-d)>\gamma$ distinct sums for~$v$.
Thus we can do it in such a way that $v$ is no in conflict with any of its $4^+$-neighbours.
By Observation~\ref{obs3v} we therefore obtain a contradiction. $\blacksquare$\\

\end{proof}

\begin{corollary}\label{obs2v}
For every vertex $v \in V(H)$ with $d(v) \ge 7$, $n_{2^-}(v) \le d(v)-5$.
\end{corollary}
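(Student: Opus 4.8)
The plan is to derive the corollary directly from Lemma~\ref{lemma4v} by a short counting argument run by contradiction, with no new structural work on $H$. Fix $v$ with $d:=d(v)\ge 7$ and introduce the shorthand $\alpha=n_{2^-}(v)$, $\beta=n_3(v)$ and $\gamma=n_{4^+}(v)$, exactly the three classes of neighbours appearing in the proof of Lemma~\ref{lemma4v}. Then $n_{3^-}(v)=\alpha+\beta$ and, since every neighbour of $v$ falls into precisely one of the three classes, $\alpha+\beta+\gamma=d$. I would also record the standing hypotheses $d\le\Delta\le k$, so that $k-d\ge 0$, together with $k\ge 8$. The goal $n_{2^-}(v)\le d-5$ is the statement $\alpha\le d-5$, so I would assume for contradiction that $\alpha\ge d-4$; this immediately gives $\beta+\gamma=d-\alpha\le 4$, and in particular $\gamma\le 4$.

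Next I would feed this assumption into Lemma~\ref{lemma4v}, which reads $\gamma\ge \alpha+1+(\alpha+\beta)(k-d)$, in order to force $\gamma$ to be large and collide with $\gamma\le 4$. The argument naturally splits according to how close $d$ is to the colour budget $k$. If $d\ge 8$, then discarding the (nonnegative) term $(\alpha+\beta)(k-d)$ already gives $\gamma\ge\alpha+1\ge(d-4)+1=d-3\ge 5$, contradicting $\gamma\le 4$. The delicate case is $d=7$, where the crude bound $\gamma\ge\alpha+1$ only yields $\gamma\ge 4$ and no contradiction; here I would invoke $k\ge 8$, which forces $k-d\ge 1$, and combine it with $\alpha+\beta\ge\alpha$ to obtain $\gamma\ge \alpha+1+\alpha=2\alpha+1\ge 2(d-4)+1=7$, again contradicting $\gamma\le 4$. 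Either way the assumption $\alpha\ge d-4$ is untenable, which proves $n_{2^-}(v)\le d(v)-5$.

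The point that deserves care is the degenerate regime $k=d$: if $d(v)$ equalled the colour budget $k$, the factor $k-d$ would vanish and the extra slack contributed by the $3^-$-neighbours would disappear, leaving only the weaker bound $n_{2^-}(v)\le d-4$ that $\gamma\ge\alpha+1$ alone yields when $d=7$. This is exactly why the case distinction on $d$ is needed, and why the global hypothesis $k\ge 8$ matters: it rules out $k=d$ precisely in the boundary case $d=7$ (forcing $k-d\ge 1$ there), while for $d\ge 8$ the estimate $d-3\ge 5$ carries the argument unaided. No heavy computation is involved; the only thing to track is that $k\ge 8$ upgrades $k-d\ge 0$ to $k-d\ge 1$ when $d=7$.
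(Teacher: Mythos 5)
Your proof is correct and takes essentially the same route as the paper's: both assume $n_{2^-}(v)\ge d(v)-4$, combine Lemma~\ref{lemma4v} with the counting bound $n_{2^-}(v)+n_{4^+}(v)\le d(v)$ to eliminate the case $d(v)\ge 8$, and then invoke $k\ge 8$ (hence $k-d(v)\ge 1$) to kill the boundary case $d(v)=7$. The only cosmetic difference is that the paper derives $d(v)=7$ from the counting inequality $d(v)\ge 2d(v)-7$ rather than splitting into cases up front.
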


\begin{proof}
Suppose to the contrary that $n_{2^-}(v) \ge d(v)-4$ for some $v \in V(H)$ with $d(v) \ge 7$. By Lemma~\ref{lemma4v} we have: $n_{4^+}(v) \ge n_{2^-}(v)+1+ n_{3^-}(v)\times(k - d(v))\ge d(v)-4+1+n_{3^-}(v)\times(k - d(v)) \ge d(v)-3$.

Consequently, we have:
$d(v) \ge n_{2^-}(v)+n_{4^+}(v) \ge d(v)-4+d(v)-3=2d(v)-7$.
Hence, $d(v) \le 7$, {\it i.e.} $d(v)=7$. As $k \ge 8$, then by Lemma~\ref{lemma4v} we thus obtain:
$n_{4^+}(v) \ge n_{2^-}(v)+1+ n_{3^-}(v)\times(k - d(v))\ge 4 + n_{3^-}(v)\times(k - d(v)) \ge d(v)$, a contradiction with the fact that $n_{2^-}(v) \ge d(v)-4$. $\blacksquare$
\end{proof}

Within the proof of the remaining structural properties of $H$, aggregated in Claim~\ref{claimstructure} below, we shall apply several times the following algebraic tool due to Alon~\cite{Alon}.

\begin{theorem}[Combinatorial
Nullstellensatz]\label{Comb_Nul} Let $\mathbb{F}$ be an arbitrary
field, and let $P=P(x_1,\ldots,x_n)$ be a polynomial in
$\mathbb{F}[x_1,\ldots,x_n]$.
Suppose the coefficient of a monomial $x_1^{k_1}\ldots
x_n^{k_n}$, where each $k_i$ is a non-negative
integer, is non-zero in $P$ and the degree ${\rm deg}(P)$ of
$P$ equals $\sum_{i=1}^n k_i$.
If moreover $S_1,\ldots,S_n$ are any
subsets of $\mathbb{F}$ with $|S_i|>k_i$ for $i=1,\ldots,n$,
then there are $s_1\in
S_1,\ldots,s_n\in S_n$ so that $P(s_1,\ldots,s_n)\neq 0$.
\end{theorem}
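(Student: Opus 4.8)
The plan is to derive this from a \emph{representation} form of the Nullstellensatz, so I would first isolate the following intermediate claim: if $f\in\mathbb{F}[x_1,\ldots,x_n]$ vanishes at every point of $S_1\times\cdots\times S_n$, where the $S_i\subseteq\mathbb{F}$ are nonempty and $g_i(x_i):=\prod_{s\in S_i}(x_i-s)$, then one can write $f=\sum_{i=1}^n h_i g_i$ with polynomials $h_i$ obeying $\deg(h_i)\leq \deg(f)-\deg(g_i)$. The theorem itself would then follow by a short coefficient comparison.

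To establish this representation I would reduce $f$ modulo the $g_i$. Writing $t_i=|S_i|$, each $g_i$ is monic of degree $t_i$ in $x_i$, so $x_i^{t_i}=g_i(x_i)+r_i(x_i)$ with $\deg r_i<t_i$. Whenever a monomial of $f$ carries $x_i$ to a power $a\geq t_i$, I substitute $x_i^{a}=x_i^{a-t_i}g_i+x_i^{a-t_i}r_i$: the first summand is a multiple of $g_i$ whose cofactor has total degree at most $\deg(f)-t_i$, while the second strictly lowers the exponent of $x_i$ without raising total degree. Iterating over all variables yields $f=\sum_i h_i g_i+\bar f$, where $\bar f$ has degree strictly below $t_i$ in each $x_i$ and the bound $\deg(h_i)\leq \deg(f)-\deg(g_i)$ is preserved because every cofactor that enters $h_i$ has degree at most $\deg(f)-t_i$. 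Since $f$ and each $g_i$ vanish on the grid $S_1\times\cdots\times S_n$, so does $\bar f$; but a polynomial whose degree in each $x_i$ is below $|S_i|$ that vanishes on the entire grid must be identically zero. This last fact is the multivariate extension of ``a univariate polynomial of degree $d$ has at most $d$ roots'', which I would prove by induction on $n$. Hence $\bar f=0$ and the representation follows.

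With the representation in hand I would argue by contradiction. Suppose $P$ vanishes on all of $S_1\times\cdots\times S_n$. Shrinking each $S_i$ if necessary, I may assume $|S_i|=k_i+1$, so $\deg g_i=k_i+1$ while $P$ still vanishes on the smaller grid. The representation gives $P=\sum_i h_i g_i$ with $\deg(h_i)\leq \deg(P)-(k_i+1)$. I then inspect the coefficient of the distinguished monomial $x_1^{k_1}\cdots x_n^{k_n}$, whose total degree is exactly $\deg(P)$ by hypothesis. Each product $h_i g_i$ has total degree at most $\deg(P)$, so any appearance of this maximal-degree monomial in $h_i g_i$ must come from a top-degree term, namely a top-degree monomial of $h_i$ times the leading term $x_i^{k_i+1}$ of $g_i$; thus every top-degree monomial of $h_i g_i$ is divisible by $x_i^{k_i+1}$. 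But $x_1^{k_1}\cdots x_n^{k_n}$ carries only the power $k_i<k_i+1$ of $x_i$, so its coefficient in each $h_i g_i$, and therefore in $P$, is zero. This contradicts the assumption that the coefficient is nonzero, and the theorem follows.

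The main obstacle is the intermediate representation lemma: the reduction must simultaneously push every per-variable degree below $|S_i|$ and keep the total-degree bound $\deg(h_i)\leq \deg(f)-\deg(g_i)$ under control, and the grid-vanishing step requires the inductive argument on $n$. Once the representation and its degree bounds are secured, the concluding coefficient comparison is purely bookkeeping.
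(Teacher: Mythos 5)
Your proof is correct, but note that there is no in-paper argument to compare it against: the statement is Alon's Combinatorial Nullstellensatz, which the paper imports verbatim from the cited reference \cite{Alon} as an external tool and does not prove. What you have reconstructed is essentially Alon's own two-step proof. The intermediate representation lemma (if $f$ vanishes on $S_1\times\cdots\times S_n$ then $f=\sum_i h_ig_i$ with $g_i(x_i)=\prod_{s\in S_i}(x_i-s)$ and $\deg(h_i)\leq\deg(f)-\deg(g_i)$) is Alon's first theorem, and your derivation of it --- reduction modulo the monic $g_i$, with the remainder killed by the inductive grid-vanishing lemma --- is the standard argument; the degree bookkeeping you give (each cofactor has total degree at most $\deg(f)-|S_i|$, and the substitution $x_i^{a}=x_i^{a-t_i}g_i+x_i^{a-t_i}r_i$ never raises total degree) is sound. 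The concluding coefficient comparison is also handled correctly at its two delicate points: shrinking each $S_i$ to size exactly $k_i+1$ so the contradiction hypothesis still applies to the subgrid, and the observation that any monomial of maximal degree $\deg(P)$ in $h_ig_i$ must be divisible by $x_i^{k_i+1}$ since the top-degree part of $g_i$ is exactly $x_i^{k_i+1}$, whereas the distinguished monomial carries only $x_i^{k_i}$. So the proposal is a complete and faithful proof of the quoted theorem, matching the original source rather than anything in this paper.
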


\begin{claim}\label{claimstructure}
The graph $H$ does not contain any of:

\begin{enumerate}
\item[(C1)] \label{c1} a $2^-$-vertex $v$ adjacent to a $(\frac{k}{2}+1)^-$-vertex $u$;
\item[(C2)] \label{c2} a $4^-$-vertex $v$ adjacent to a $4^-$-vertex $u$;
\item[(C3)] \label{c3} a $3^-$-vertex $v$ adjacent to a $5^-$-vertex $u$;
\item[(C4)] \label{c4} a $5$-vertex $v$ adjacent to three $4$-vertices $v_1,v_2,v_3$;
\item[(C5)] \label{c5} a $6$-vertex $v$ adjacent to a $3^-$-vertex $u$ and to a $4^-$-vertex $w$;
\item[(C6)] \label{c12} a $7$-vertex $v$ adjacent to a $2^-$-vertex $u$, to a $3^-$-vertex $w$ and to a $4^-$-vertex $y$.
\item[(C7)] \label{c6} a vertex $v$ of degree $d\geq 8$ adjacent to $(d-7)$ $2^-$-vertices $v_1,\ldots,v_{d-7}$, to two $3^-$-vertices $u_1,u_2$ and to a $4^-$-vertex $w$;
\item[(C8)] \label{c11} a vertex $v$ of degree $d=\Delta\geq 3$ adjacent to $(d-2)$ $3^-$-vertices $v_1,\ldots,v_{d-2}$ and to one $4^-$-vertex $u$.
\end{enumerate}
\end{claim}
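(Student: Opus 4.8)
The plan is to dispose of all eight configurations within a single reduction scheme and then to vary the combinatorial details case by case. For a putative configuration I single out a vertex $v$ around which the essential sum-distinction constraints cluster---the vertex of largest degree in (C4)--(C8), and a bounded-degree endpoint in (C1)--(C3). I delete from $H$ the edges joining $v$ to the prescribed low-degree neighbours and uncolour those neighbours. The resulting graph $H'$ satisfies $H'\prec H$, $\Delta(H')\le k$ and $\mad(H')<\frac{14}{3}$, so by minimality it admits a tnsd $(k+3)$-colouring. It then suffices to colour the deleted edges and recolour the uncoloured vertices so as to recover a tnsd $(k+3)$-colouring of $H$, contradicting the choice of $H$. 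Throughout, Lemma~\ref{lemma4v} and Corollary~\ref{obs2v} are invoked to bound the neighbourhood counts and to discard subcases that are already incompatible with the established structure of $H$.

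Whenever the low-degree neighbours of $v$ are pairwise non-adjacent I would carry out the extension in two decoupled phases, exactly as in the proof of Lemma~\ref{lemma4v}. In the first phase I colour the deleted edges so that the partial colouring remains proper and so that $v$ is not in conflict with any of its $4^+$-neighbours: the admissible edge-colourings use pairwise distinct colours at $v$ and, by Lemma~\ref{MartheLemma}, yield a number of distinct sums $s(v)$ that is bounded below by a quantity strictly exceeding the number of forbidden sum-values contributed by the $4^+$-neighbours of $v$; each configuration's degree hypotheses are tuned precisely so that this inequality holds. In the second phase I recolour every $3^-$-vertex incident to $v$ by Observation~\ref{obs3v}, which is always possible since at most nine colours are blocked while $k+3\ge 11$ are available, thereby restoring properness and removing every remaining conflict with such a vertex.

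The delicate configurations are those in which the two phases cannot be fully separated, either because two low-degree vertices are themselves adjacent (as in (C2) and (C3), where a degree-$4$ endpoint is not covered by Observation~\ref{obs3v}) or because $v$ carries many low-degree neighbours whose non-conflict constraints interact (as in (C4)--(C8)). For these I would set up a single polynomial $P$ over $\mathbb{R}$ whose variables are the colours still to be assigned, and whose factors are the differences $(x_i-x_j)$ enforcing properness together with the differences $(s_a-s_b)$ enforcing the relevant non-conflicts. Applying the Combinatorial Nullstellensatz (Theorem~\ref{Comb_Nul}) reduces the whole problem to exhibiting one monomial of degree $\deg(P)$ whose coefficient is nonzero, the available colour lists having been arranged to be strictly larger than the corresponding exponents.

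The hard part will be precisely the nonvanishing of that coefficient in the worst cases, namely (C7) and (C8), where $v$ may have degree as large as $\Delta$ and $P$ couples many properness and sum-distinction constraints simultaneously; the multi-neighbour cases (C4)--(C6) present the same difficulty on a smaller scale. Verifying that the chosen top-degree monomial survives---typically by a careful bookkeeping of the factors' degrees against the list sizes and, where needed, by a well-chosen specialisation of the variables that isolates the coefficient---is the genuine obstacle. Once each such coefficient is shown to be nonzero, the desired extension exists in every case, and the contradiction establishing Claim~\ref{claimstructure} follows.
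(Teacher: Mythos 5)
Your high-level scheme (delete the edges to the low-degree neighbours, colour the smaller graph by minimality, then extend by a counting argument plus Observation~\ref{obs3v} where the low-degree vertices are all $3^-$, and by the Combinatorial Nullstellensatz where a $4$-vertex or adjacent low-degree vertices force coupled constraints) is indeed the paper's scheme for (C1)--(C7). But there is a genuine gap at (C8), and it is not merely the deferred coefficient computations. In (C8) the central vertex has degree $d=\Delta$, which may be as large as $k$, and your plan deletes all $d-1$ edges to the low-degree neighbours and encodes everything in ``a single polynomial $P$ whose variables are the colours still to be assigned.'' That polynomial then has an unbounded number of variables and its shape depends on $d$, so ``exhibiting one monomial of degree $\deg(P)$ with nonzero coefficient'' cannot be a one-off (computer-assisted) computation as in (C2)--(C7); you would need a uniform nonvanishing argument valid for every $d\le k$, which you neither give nor sketch. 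The paper avoids this entirely: for (C8) it uses no polynomial at all, but a greedy extension whose engine is a magnitude argument --- colour $u$ with the \emph{smallest} available colour (so that $u$ or one of its incident edges gets colour at most $5$, forcing $s(u)\le 5+(k+3)+(k+2)+(k+1)=3k+11$), colour the edges at $v$ greedily, and note that when $\Delta=k$ the sum at $v$ is at least $1+2+\cdots+k=k(k+1)/2>3k+11$ for $k\ge 8$; a careful preliminary choice of $c(v)$ (matching it to a colour seen at $v_1$) guarantees the last edge $vv_1$ still has enough free colours to separate $s(v)$ from $s(w)$ and, when $\Delta\le k-1$, from $s(u)$.

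The same issue touches (C7): there $v$ has $d-7$ pendant-like $2^-$-neighbours with $d$ unbounded, so one cannot put a variable on every deleted edge. The paper's resolution is to delete only a \emph{bounded} set of four edges ($vv_1,vu_1,vu_2,vw$), keep the polynomial at six variables, and let Observation~\ref{obs3v} absorb all the remaining $2^-$- and $3^-$-vertices at the end; your phrasing ``delete the edges joining $v$ to the prescribed low-degree neighbours'' misses this truncation. Finally, a smaller but real omission: to make a single coefficient computation cover all degree subcases (e.g.\ $d(u)\in\{1,2,3\}$), the paper pads the constraint polynomial with factors such as $(x_1+x_3)^{3-d(u)}(x_2+x_4)^{4-d(w)}$ so that the top-degree monomial and the list sizes match uniformly; without this device your CN step would splinter into many separate verifications, each still to be done.
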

\medskip

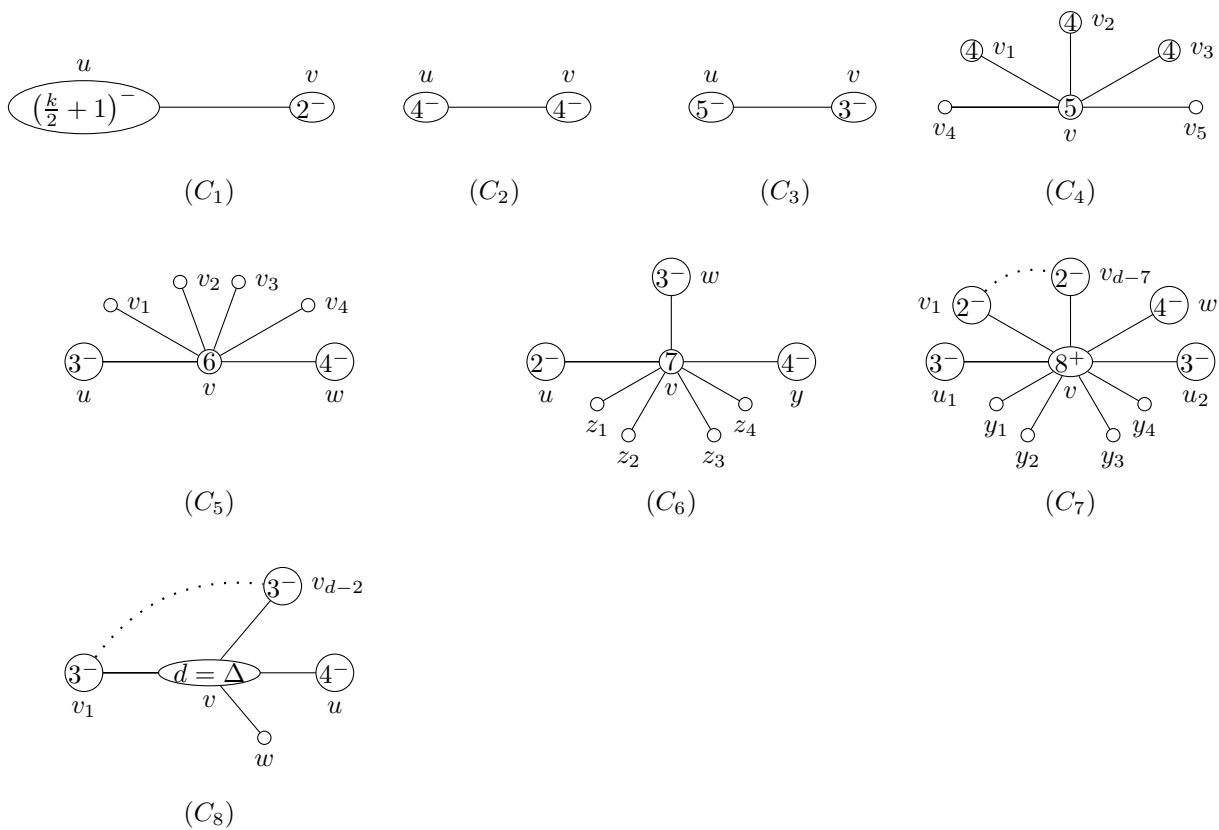
\begin{figure}[!h]
\begin{center}
\centering
\begin{tikzpicture}[scale=0.75,auto]
	\tikzstyle{w}=[draw,circle,fill=white,minimum size=5pt,inner sep=0pt]
	\tikzstyle{b}=[draw,circle,fill=black,minimum size=5pt,inner sep=0pt]
	\tikzstyle{t}=[rectangle,minimum size=5pt,inner sep=0pt]

	 \tikzstyle{whitenode}=[draw,ellipse,fill=white,minimum size=9pt,inner sep=0pt]
	 \tikzstyle{blacknode}=[draw,circle,fill=black,minimum size=9pt,inner sep=0pt]
	 \tikzstyle{texte}=[minimum size=9pt,inner sep=0pt]

\draw (0,0) node[whitenode] (u) [label=above:$u$] {$\left(\frac{k}{2}+1\right)^-$}
--++ (0:4cm) node[whitenode] (b) [label=above:$v$] {$2^-$};

\draw (2.2,-1.5) node[t] (t1) {$(C_1)$};

\draw (6,0) node[whitenode] (b) [label=above:$u$] {$4^-$}
--++ (0:2.5cm) node[whitenode] (b) [label=above:$v$] {$4^-$};

\draw (7.2,-1.5) node[t] (t1) {$(C_2)$};

\draw (11,0) node[whitenode] (b) [label=above:$u$] {$5^-$}
--++ (0:2.5cm) node[whitenode] (b) [label=above:$v$] {$3^-$};

\draw (12.3,-1.5) node[t] (t1) {$(C_3)$};

\draw (17.3,0) node[whitenode] (v) [label=below:$v$] {$5$}
--++ (0:-2.2cm) node[w] (u)  [label=below:$v_4$] {}
--++ (0:4.4cm) node[w] (w) [label=below:$v_5$] {};

\draw (v)
--++ (150:2cm) node[w] [label=right:$v_1$] (v5) {$4$};

\draw (v)
--++ (90:1.5cm) node[w] [label=right:$v_2$] (v6)  {$4$};

\draw (v)
--++ (30:2cm) node[w] [label=right:$v_3$] (v7)  {$4$};

\draw (17.3,-1.5) node[t] (t1) {$(C_4)$};

\draw (2.2,-4.5) node[whitenode] (v) [label=below:$v$] {$6$}
--++ (0:-2.2cm) node[w] (u)  [label=below:$u$] {$3^-$}
--++ (0:4.4cm) node[w] (w) [label=below:$w$] {$4^-$};

\draw (v)
--++ (150:2cm) node[w] [label=right:$v_1$] (v5) {};

\draw (v)
--++ (110:1.5cm) node[w] [label=right:$v_2$] (v6)  {};

\draw (v)
--++ (70:1.5cm) node[w] [label=right:$v_3$] (v7)  {};

\draw (v)
--++ (30:2cm) node[w] [label=right:$v_4$] (v8)  {};

\draw (2.2,-7) node[t] (t1) {$(C_5)$};

\draw (10.3,-4.5) node[whitenode] (v) [label=below:$v$] {$7$}
--++ (0:-2.2cm) node[w] (u)  [label=below:$u$] {$2^-$}
--++ (0:4.4cm) node[w] (w) [label=below:$y$] {$4^-$};

\draw (v)
--++ (90:1.5cm) node[w] [label=right:$w$] (v6)  {$3^-$};

\draw (v)
--++ (-30:1.5cm) node[w] (v8) [label=below:$z_4$] {};

\draw (v)
--++ (-60:1.5cm) node[w] (v9) [label=below:$z_3$] {};

\draw (v)
--++ (-120:1.5cm) node[w] (v10) [label=below:$z_2$] {};

\draw (v)
--++ (-150:1.5cm) node[w] (v11) [label=below:$z_1$]  {};

\draw (10.3,-7) node[t] (t1) {$(C_6)$};

\draw (17.3,-4.5) node[whitenode] (v) [label=below:$v$] {$8^+$}
--++ (0:-2.2cm) node[w] (u)  [label=below:$u_1$] {$3^-$}
--++ (0:4.4cm) node[w] (w) [label=below:$u_2$] {$3^-$};

\draw (v)
--++ (150:2cm) node[w] [label=left:$v_1$] (v5) {$2^-$};

\draw (v)
--++ (90:1.5cm) node[w] [label=right:$v_{d-7}$] (v6)  {$2^-$};

\draw (v)
--++ (30:2cm) node[w] [label=right:$w$] (v7)  {$4^-$};

\draw (v)
--++ (-30:1.5cm) node[w] (v8) [label=below:$y_4$] {};

\draw (v)
--++ (-60:1.5cm) node[w] (v9) [label=below:$y_3$] {};

\draw (v)
--++ (-120:1.5cm) node[w] (v10) [label=below:$y_2$] {};

\draw (v)
--++ (-150:1.5cm) node[w] (v11) [label=below:$y_1$] {};

\draw (v5) edge [bend left,loosely dotted,thick] node {} (v6);

\draw (17.3,-7) node[t] (t1) {$(C_7)$};

\draw (2.2,-10) node[whitenode] (v) [label=below:$v$] {$d=\Delta$}
--++ (0:-2.2cm) node[w] (u)  [label=below:$v_1$] {$3^-$}
--++ (0:4.4cm) node[w] (w) [label=below:$u$] {$4^-$};

\draw (v)
--++ (50:2cm) node[w] [label=right:$v_{d-2}$] (v7)  {$3^-$};

\draw (u) edge [bend left,loosely dotted,thick] node {} (v7);

\draw (v)
--++ (-50:1.5cm) node[w] (v8) [label=below:$w$] {};

\draw (2.2,-12.5) node[t] (t1) {$(C_8)$};

\end{tikzpicture}
\caption{Forbidden configurations in $H$
}\label{FCG_fig}
\end{center}
\end{figure}

\begin{proof}
We shall argument `reducibility' of each of these 8 configurations separately,
following a similar pattern of reasoning.
I.e., we shall first suppose by contradiction that a given configuration exists in $H$.
Then we shall consider a graph $H'$ smaller than $H$ with $\Delta(H')\leq k$ and ${\rm mad}(H')<\frac{14}{3}$
(usually guaranteing these properties by constructing $H'$ simply via deleting some edges or vertices from $H$),
and \emph{colour} it \emph{by minimality},
what shall mean from now on that we choose any tnsd $(k+3)$-colouring for $H'$.
Finally, in each case, we shall obtain a contradiction by extending the colouring chosen to a tnsd $(k+3)$-coloring of the entire $H$.
Whenever we analyze a partial colouring of a graph, the \emph{sum at a} given \emph{vertex}, $s(v)$ is defined as above, but every uncoloured edge and vertex contributes $0$ to this sum. We write that $u$ and $v$ are \emph{sum-distinguished}, if $s(u)\neq s(v)$.

\begin{enumerate}
\item[1.] Suppose there exists a $2^-$-vertex $v$ adjacent to a $(\frac{k}{2}+1)^-$-vertex $u$ in $H$. Colour $H'=H-\{uv\}$ by minimality and uncolour $v$. In order to colour $uv$ so that a (partial) $(k+3)$-colouring of $H$ obtained is proper we have to avoid at most $\frac{k}{2}+2$ colours,
     and possibly at most $\frac{k}2$ more colours to ensure the sum-distinction (of $u$ from its neighbours other than $v$). Hence, we have at least one colour left to extend the colouring, and thus obtain
     a tnsd $(k+3)$-colouring of $H$ via Observation~\ref{obs3v} applied to $v$, a contradiction.
\item[2;3.] Suppose there exists an edge $uv$ with $d(u),d(v)\leq 4$ or with $d(u)\leq 5$ and $d(v)\leq 3$  in $H$. Denote the neighbours of $u$ other than $v$ by $u_1,\ldots,u_q$, and denote the neighbours of $v$ other than $u$ by $v_1,\ldots,v_p$ (hence $d(u)=q+1$ and $d(v)=p+1$).
    By the minimality of $H$ there exists a tnsd $(k+3)$-colouring $c$ of $H'=H-\{uv\}$.
    Let us now undelete the edge $uv$ and remove the colours from $u$ and $v$.
    In order to extend the current partial colouring of $H$ to its proper total $(k+3)$-colouring we may use at least $11-6=5$ colours for $u$, at least $11-6=5$ colours for $uv$ and at least $11-6=5$ colours for $v$ if $d(u),d(v)\leq 4$, or otherwise: at least $3$ colours for $u$, at least $5$ colours for $uv$ and at least $7$ colours for $v$. Denote the respective lists of available colours by $L_u,L_{uv},L_v$, the sum at $u_i$
    by $s_i$ and the sum at $v_j$
    by $s'_j$ for $i=1,\ldots,q$, $j=1,\ldots,p$. Consider a polynomial with real variables:
    \begin{eqnarray}
    f(x_0,x_1,x_2) &=& (x_0-x_1)(x_0-x_2)(x_1-x_2)\left(x_0+\sum_{i=1}^q c(uu_i)-x_2-\sum_{i=1}^p c(vv_i)\right)\nonumber\\
    &\times&\prod_{i=1}^q\left(x_0+x_1+\sum_{j=1}^qc(uu_j)-s_i\right)\prod_{i=1}^p\left(x_2+x_1+\sum_{j=1}^pc(vv_j)-s'_i\right).\nonumber
    \end{eqnarray}
    Note that in order to extend the colouring $c$ to a tnsd $(k+3)$-colouring 
    of $H$ it is now sufficient to find a non-zero (i.e. with non-zero value of $f$) substitution for $f$ such that $x_0\in L_u$, $x_1\in L_{uv}$ and $x_2\in L_v$. It is thus the more sufficient to find a non-zero substitution from these list for the polynomial $g$ defined as $g(x_0,x_1,x_2):=f(x_0,x_1,x_2)\cdot(x_0+x_1)^{3-q}(x_2+x_1)^{3-p}$ if $d(u),d(v)\leq 4$ or by $g(x_0,x_1,x_2):=f(x_0,x_1,x_2)\cdot(x_0+x_1)^{4-q}(x_2+x_1)^{2-p}$ otherwise.
    In the first of these cases however, the coefficient of the monomial $x_0^4x_1^3x_2^3$ in $g$ is the same as in $$h_1(x_0,x_1,x_2)=(x_0-x_1)(x_0-x_2)^2(x_1-x_2)(x_0+x_1)^{3}(x_2+x_1)^{3},$$ and equals\footnote{This and further computations 
    were obtained 
    by means of a computer program; one may verify these using {\it e.g.} 
    \emph{Wolfram Mathematica}.}: $2$. Analogously, in the second case, the coefficient of the monomial $x_0^2x_1^3x_2^5$ in $g$ is the same as in $$h_2(x_0,x_1,x_2)=(x_0-x_1)(x_0-x_2)^2(x_1-x_2)(x_0+x_1)^{4}(x_2+x_1)^{2},$$ and equals also: $2$.
    In the both cases we thus obtain a contradiction by the Combinatorial Nullstellensatz.
\item[4.] Suppose there exists a $5$-vertex $v$ with $N(v)=\{v_1,\ldots,v_5\}$ such that $d(v_1)=d(v_2)=d(v_3)=4$ in $H$.
    By the minimality of $H$ there exists a tnsd $(k+3)$-colouring $c$ of $H'=H-\{vv_1,vv_2,vv_3\}$.
    Delete the colours of $v,v_1,v_2,v_3$. We associate variables $x_0,x_1,x_2,x_3,x_4,x_5,x_6$ with $v,vv_1,vv_2,vv_3,v_1,v_2,v_3$, respectively.
    For these we
    denote the lists of their available colours by $L_0,\ldots,L_6$ (obtained after excluding from $\{1,\ldots,k+3\}$ the colours already used on their respective adjacent or incident vertices and edges),
    respectively. Then $|L_0|\geq 7, |L_1|,|L_2|,|L_3|\geq 6, |L_4|,|L_5|,|L_6|\geq 5$.
    Let
    \begin{eqnarray}
    f(x_0,\ldots,x_6) &=& (x_0-x_1)(x_0-x_2)(x_0-x_3)(x_0-x_4)(x_0-x_5)(x_0-x_6)(x_1-x_2)(x_1-x_3)(x_2-x_3)\nonumber\\
                      &\times& (x_1-x_4)(x_2-x_5)(x_3-x_6)\prod_{i=1}^3(x_0+x_1+x_2+x_3+s(v)-x_i-x_{i+3}-s(v_i))\nonumber\\
                      &\times& \prod_{i=4}^5(x_0+x_1+x_2+x_3+s(v)-s(v_i)) \prod_{i=1}^3\prod_{u\in N(v_i)\smallsetminus\{v\}}(x_i+x_{i+3}+s(v_i)-s(u))\nonumber
    \end{eqnarray}
    (where $s(w)$ refers to the contemporary partial sum for every vertex $w$ in $H$). Note that the coefficient of the monomial $x_0^6x_1^5x_2x_3^5x_4^2x_5^3x_6^4$ in $f$ is the same as in the following polynomial:
    \begin{eqnarray}
    g(x_0,\ldots,x_6) &=& (x_0-x_1)(x_0-x_2)(x_0-x_3)(x_0-x_4)(x_0-x_5)(x_0-x_6)(x_1-x_2)(x_1-x_3)(x_2-x_3)\nonumber\\
                      &\times& (x_1-x_4)(x_2-x_5)(x_3-x_6)(x_0+x_2+x_3-x_4)(x_0+x_1+x_3-x_5)(x_0+x_1+x_2-x_6)\nonumber\\
                      &\times& (x_0+x_1+x_2+x_3)^2(x_1+x_4)^3(x_2+x_5)^3(x_3+x_6)^3,\nonumber
    \end{eqnarray}
    and equals $16$. By the Combinatorial Nullstellensatz we thus may extend our colouring to a tnsd $(k+3)$-colouring of $H$, a contradiction.
\item[5.] Suppose there exists a $6$-vertex $v$ adjacent to a $3^-$-vertex $u$, to a $4^-$-vertex $w$ and to vertices $v_1,v_2,v_3,v_4$ in $H$.
    By the minimality of $H$ there exists a tnsd $(k+3)$-colouring $c$ of $H'=H-\{vu,vw\}$.
    Delete the colours of $u,v,w$ and associate variables $x_0,x_1,x_2,x_3,x_4$ with $v,vu,vw,u,w$, respectively.
    Denote the lists of available colours for these by $L_0,\ldots,L_4$, resp., and note that $|L_0|\geq 3, |L_1|\geq 5, |L_2|\geq 4, |L_3|\geq 7, |L_4|\geq 5$.
    Consider a polynomial:
    \begin{eqnarray}
    f(x_0,\ldots,x_4) &=& (x_0-x_1)(x_0-x_2)(x_0-x_3)(x_0-x_4)(x_1-x_2)(x_1-x_3)(x_2-x_4)\nonumber\\
                      &\times& (x_0+x_2+s(v)-x_3-s(u))(x_0+x_1+s(v)-x_4-s(w))\nonumber\\
                      &\times& \prod_{i=1}^4(x_0+x_1+x_2+s(v)-s(v_i))\nonumber\\
                      &\times& \prod_{y\in N(u)\smallsetminus\{v\}}(x_1+x_3+s(u)-s(y))
                      \prod_{y\in N(w)\smallsetminus\{v\}}(x_2+x_4+s(w)-s(y))\nonumber
    \end{eqnarray}
    and set $g(x_0,\ldots,x_4)=f(x_0,\ldots,x_4)\cdot(x_1+x_3)^{3-d(u)}(x_2+x_4)^{4-d(w)}$.
    Note that the coefficient of the monomial $x_0^2x_1^4x_2^3x_3^5x_4^4$ in $g$ is the same as in:
    \begin{eqnarray}
    h(x_0,\ldots,x_4) &=& (x_0-x_1)(x_0-x_2)(x_0-x_3)(x_0-x_4)(x_1-x_2)(x_1-x_3)(x_2-x_4)\nonumber\\
                      &\times& (x_0+x_2-x_3)(x_0+x_1-x_4)(x_0+x_1+x_2)^4(x_1+x_3)^2(x_2+x_4)^3,\nonumber
    \end{eqnarray}
    and equals $-10$. By the Combinatorial Nullstellensatz there exists a non-zero substitution for $g$, hence the more for $f$, from the corresponding lists $L_0,\ldots,L_4$, and thus we may extend our partial colouring to a tnsd $(k+3)$-colouring of $H$, a contradiction.
\item[6.] Suppose there exists a $7$-vertex $v$ adjacent to a $2^-$-vertex $u$, to a $3^-$-vertex $w$ and to a $4^-$-vertex $y$ in $H$.
    Denote the remaining neighbours of $v$ by $z_1,z_2,z_3,z_4$.
    By the minimality of $H$ there exists a tnsd $(k+3)$-colouring $c$ of $H'=H-\{vu,vw,vy\}$.
    Delete the colours of $u,w,y$ and associate variables $x_1,x_2,x_3,x_4$ with $vu,vw,vy,y$, respectively.
    Denote the lists of available colours for these by $L_1,L_2,L_3,L_4$, resp., and note that $|L_1|\geq 5, |L_2|\geq 4, |L_3|\geq 3, |L_4|\geq 4$.
    Consider a polynomial (and note that by Observation~\ref{obs3v} we shall be able to colour properly vertices $u$ and $w$ at the end so that these are sum distinguished from their neighbours, thus we omit the corresponding requirements within the polynomial below):
    \begin{eqnarray}
    f(x_1,x_2,x_3,x_4) &=& (x_1-x_2)(x_1-x_3)(x_2-x_3)(x_3-x_4)\prod_{i=1}^4(x_1+x_2+x_3+s(v)-s(z_i))\nonumber\\
                      &\times& (x_1+x_2+s(v)-x_4-s(y))
                      \prod_{z\in N(y)\smallsetminus\{v\}}(x_3+x_4+s(y)-s(z)).\nonumber
    \end{eqnarray}
    Let $g(x_1,x_2,x_3,x_4)=f(x_1,x_2,x_3,x_4)\cdot (x_3+x_4)^{4-d(y)}$.
    Note that the coefficient of the monomial $x_1^4x_2^3x_3^2x_4^3$ in $g$ is the same as in:
    $$h(x_1,x_2,x_3,x_4) = (x_1-x_2)(x_1-x_3)(x_2-x_3)(x_3-x_4)(x_1+x_2+x_3)^4(x_1+x_2-x_4)(x_3+x_4)^3,$$
    and equals $-6$. Therefore, analogously as above we may extend our colouring, first to $vu,vw,vy,y$ by the Combinatorial Nullstellensatz, and then to $u$ and $w$ by Observation~\ref{obs3v}, to a tnsd $(k+3)$-colouring of $H$, a contradiction.
\item[7.] Suppose there exists
    a vertex $v$ of degree $d\geq 8$
    adjacent to $(d-7)$ $2^-$-vertices $v_1,\ldots,v_{d-7}$, to two $3^-$-vertices $u_1,u_2$ and to a $4^-$-vertex $w$ in $H$. The remaining neighbours of $v$ we denote by $y_1,y_2,y_3,y_4$.
    By the minimality of $H$ there exists a tnsd $(k+3)$-colouring $c$ of $H'=H-\{vv_1,vu_1,vu_2,vw\}$.
    Delete the colours of $v,v_1,\ldots,v_{d-7},u_1,u_2,w$ and associate variables $x_0,x_1,x_2,x_3,x_4,x_5$ with $v,vv_1,vu_1,vu_2,vw,w$, respectively.
    Denote the lists of available colours for these by $L_0,\ldots,L_5$, resp., and note that $|L_0|\geq 3, |L_1|\geq 6, |L_2|,|L_3|\geq 5, |L_4|\geq 4, |L_5|\geq 5$.
    Consider a polynomial
    (the vertices $v_1,\ldots,v_{d-7},u_1,u_2$ shall be coloured at the end via Observation~\ref{obs3v}):
    \begin{eqnarray}
    f(x_0,\ldots,x_5) &=& (x_0-x_1)(x_0-x_2)(x_0-x_3)(x_0-x_4)(x_0-x_5)(x_1-x_2)(x_1-x_3)(x_1-x_4)\nonumber\\
                      &\times& (x_2-x_3)(x_2-x_4)(x_3-x_4)(x_4-x_5)\prod_{i=1}^4(x_0+x_1+x_2+x_3+x_4+s(v)-s(y_i))\nonumber\\
                      &\times& (x_0+x_1+x_2+x_3+s(v)-x_5-s(w))\prod_{z\in N(w)\smallsetminus\{w\}}(x_4+x_5+s(w)-s(z)).\nonumber
    \end{eqnarray}
    Let $g(x_0,\ldots,x_5)=f(x_0,\ldots,x_5)\cdot(x_4+x_5)^{4-d(w)}$. Then the coefficient of the monomial $x_0x_1^5x_2^4x_3^3x_4^3x_5^4$ in $g$ is the same as in:
    \begin{eqnarray}
    h(x_0,\ldots,x_5) &=& (x_0-x_1)(x_0-x_2)(x_0-x_3)(x_0-x_4)(x_0-x_5)(x_1-x_2)(x_1-x_3)(x_1-x_4)\nonumber\\
                      &\times& (x_2-x_3)(x_2-x_4)(x_3-x_4)(x_4-x_5)(x_0+x_1+x_2+x_3+x_4)^4\nonumber\\
                      &\times& (x_0+x_1+x_2+x_3-x_5)(x_4+x_5)^3.\nonumber
    \end{eqnarray}
    and equals $5$. By the Combinatorial Nullstellensatz and Observation~\ref{obs3v} we may thus extend our partial colouring to a tnsd $(k+3)$-colouring of $H$, a contradiction.
\item[8.] Suppose $v$ is a vertex of degree $d=\Delta\geq 3$ adjacent to $(d-2)$ $3^-$-vertices $v_1,\ldots,v_{d-2}$ and to one $4^-$-vertex $u$ in $H$.
    Denote the remaining neighbour of $v$ by $w$.
    By the minimality of $H$ there exists a tnsd $(k+3)$-colouring $c$ of $H'=H-\{vv_1,\ldots,vv_{d-2},vu\}$.
    Delete the colours of $v,v_1,\ldots,v_{d-2},u$. First extend such a partial colouring of $H$ by choosing
    a colour (in $\{1,\ldots,k+3\}$) for $v$ in the following manner. If $d(v_1)=3$, denote the colours associated to the edges incident with $v_1$ and different from $vv_1$ by $a$ and $b$, and if $c(vw)\notin \{a,b\}$, choose for $v$ any colour in $\{a,b\}\smallsetminus\{c(w)\}$.
    In all other cases, choose for $v$ any colour distinct from $c(vw)$ and $c(w)$. Denote the colour of $v$ by $c(v)$.
    Then choose a colour $c(u)$ for $u$ as small as possible
    (and note that as our total colouring must be proper, this implies that either $u$ or some of its incident edges other than $uv$ has now colour at most $5$). Next we choose any colour $c(uv)$ so that the obtained (partial) colouring of $H$ is proper and $u$ is sum distinguished from its neighbours other than $v$ (this is possible, as $k+3>9$). Then we subsequently choose greedily colours for $vv_2,\ldots,vv_{d-2}$ so that the obtained partial total colouring of $H$ is proper. Finally we choose a colour $c(vv_1)$ for $vv_1$ distinct from the colours of its incident edges and the colour of $v$ (by our choice of $c(v)$, this blocks at most $\Delta+1$ choices) so that the sum at $v$ is distinct from the sum at $w$,
    and if $\Delta\leq k-1$, also distinct from the sum at $u$.
    We complete our colouring by choosing the colours for $v_1,\ldots,v_{d-2},u$ consistently with 
    Observation~\ref{obs3v}. In order to see that the obtained colouring of $H$ is sum distinguishing it is sufficient to note that the sum at $v$ is distinct from the sum at $u$ when $\Delta=k$. Indeed, 
    the sum of colours incident with $v$ except for the colour of $uv$ equals at least $1+\ldots+k=k(k+1)/2$, while by our choice of the colour for $u$, the sum of its incident colours except the one of $uv$ is at most $5+(k+3)+(k+2)+(k+1)=3k+11<k(k+1)/2$ (for $k\geq 8$). Thus we obtain a contradiction with the minimality of $H$. $\blacksquare$

\end{enumerate}

\end{proof}

\subsection{Discharging procedure}

In this subsection we use the discharging technique exploiting the vertices of the graph $H$.
For this aim we first define the \emph{weight
function} $\omega: V(H) \rightarrow \mathbb{R}$ by setting $\omega(x)=d(x)-\frac{14}{3}$ for every $x\in V(H)$.
Next we shall apply so called \emph{Ghost vertices method}, introduced earlier by Bonamy, Bousquet and Hocquard~\cite{BonamyEtAl},
and based on the following observation (where given any subsets $U,U'\subseteq V(H)$ and a vertex $v$, $d_U(v)$ denotes the number of neighbours of $v$ from $U$, while $E(U,U')$ is the set of edges joining $U$ and $U'$ in the graph $H$).

\begin{observation}\label{obs1v}
\noindent Let $V_1 \cup V_2$ be a partition of $V(H)$ where, say $V_1$ is the set of vertices of degree at least $3$ and $V_2$ -- the set of vertices of degree at most $2$ in $H$;
\begin{itemize}
\item every vertex $u$ in $H$ has an initial weight $w(u)=d(u)-\frac{14}{3}$.
\item If we can discharge the weights in $H$ so that:
\begin{enumerate}
\item every vertex in $V_1$ has a non-negative weight;
\item and every vertex $u$ in $V_2$ has a final weight of at least $d(u)-\frac{14}{3}+d_{V_1}(u)$, then\\

for the new weight assignment $\omega'$, we have $\sum_{v \in V_2} (d(v)-\frac{14}{3}+d_{V_1}(v)) \leq \sum_{v \in V_2} \omega'(v)$, as well as\\

$\sum_{v \in V} \omega(v)=\sum_{v \in V} \omega'(v)$ and $\sum_{v \in V_1} \omega'(v) \geq 0$. Therefore,

\begin{eqnarray}
   \sum_{v \in V_1} \left(d_{V_1}(v)-\frac{14}{3}\right) & \geq & \sum_{v \in V_1} \left(d_{V_1}(v)-\frac{14}{3}\right) + \sum_{v \in V_2} (d(v)-\frac{14}{3}+d_{V_1}(v)) - \sum_{v \in V_2} \omega'(v) \nonumber\\
    & \geq & \sum_{v \in V_1} \left(d_{V_1}(v)-\frac{14}{3}\right) + |E(V_1,V_2)|+\sum_{v \in V_2} \left(d(v)-\frac{14}{3}\right) - \sum_{v \in V_2} \omega'(v) \nonumber\\
     & \geq & \sum_{v \in V_1} \left(d(v)-\frac{14}{3}\right) + \sum_{v \in V_2} \left(d(v)-\frac{14}{3}\right) - \sum_{v \in V_2} \omega'(v) \nonumber\\
     & \geq & \sum_{v \in V} \omega(v) - \sum_{v \in V_2} \omega'(v) \nonumber\\
   & \geq & \sum_{v \in V_1} \omega'(v) \nonumber\\
    & \geq & 0. \nonumber
\end{eqnarray}
\end{enumerate}

Thus we can conclude that ${\rm mad}(H) \geq {\rm mad}(H[V_1]) \geq \frac{14}{3}$.

\end{itemize}

\end{observation}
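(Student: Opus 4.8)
The plan is to reduce the stated conclusion to a single nonnegativity inequality about the induced subgraph $H[V_1]$, and then to establish that inequality by a weight-conservation (double-counting) argument driven by the two discharging hypotheses.

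First I would use the monotonicity of maximum average degree under taking subgraphs: since $H[V_1]$ is a subgraph of $H$, the definition of ${\rm mad}$ immediately gives ${\rm mad}(H)\geq {\rm mad}(H[V_1])$. Hence it suffices to prove ${\rm mad}(H[V_1])\geq \frac{14}{3}$. Taking $H[V_1]$ itself as the witnessing subgraph in the maximum defining ${\rm mad}$, and using $\sum_{v\in V_1} d_{V_1}(v)=2|E(H[V_1])|$, this in turn reduces to showing that the average degree of $H[V_1]$ is at least $\frac{14}{3}$, i.e.
\[
\sum_{v\in V_1}\left(d_{V_1}(v)-\frac{14}{3}\right)\ \geq\ 0.
\]

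The heart of the argument is to derive this nonnegativity from conditions (1) and (2). I would begin from the quantity $\sum_{v\in V_1}(d_{V_1}(v)-\frac{14}{3})$ and inflate it by adding the expression $\sum_{v\in V_2}(d(v)-\frac{14}{3}+d_{V_1}(v))-\sum_{v\in V_2}\omega'(v)$, which is nonpositive precisely by discharging condition (2). The crucial bookkeeping step is to use the edge-counting identities $\sum_{v\in V_2} d_{V_1}(v)=|E(V_1,V_2)|=\sum_{v\in V_1} d_{V_2}(v)$ together with $d(v)=d_{V_1}(v)+d_{V_2}(v)$ for $v\in V_1$: these let me absorb the boundary term $|E(V_1,V_2)|$ and upgrade every induced degree $d_{V_1}(v)$ on the $V_1$-side into the full degree $d(v)$, turning the $V_1$-sum into $\sum_{v\in V_1}(d(v)-\frac{14}{3})$. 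Combining the two sides then recognizes the initial total weight, so the whole expression becomes $\sum_{v\in V}\omega(v)-\sum_{v\in V_2}\omega'(v)$.

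Finally I would invoke weight conservation — discharging merely redistributes charge, so $\sum_{v\in V}\omega(v)=\sum_{v\in V}\omega'(v)$ — to rewrite this as $\sum_{v\in V_1}\omega'(v)$, and then apply condition (1), which guarantees that every $V_1$-vertex has nonnegative final weight, to conclude $\sum_{v\in V_1}\omega'(v)\geq 0$. Reading the chain back to its start gives the desired $\sum_{v\in V_1}(d_{V_1}(v)-\frac{14}{3})\geq 0$. I expect the main obstacle to be organizational rather than conceptual: the delicate point is the correct handling of the cross edges in $E(V_1,V_2)$. The extra charge $+d_{V_1}(u)$ demanded of each $V_2$-vertex $u$ in condition (2) is exactly what pays for these boundary edges and permits the replacement of $d_{V_1}(v)$ by $d(v)$ on the $V_1$-side; keeping the signs straight and counting $E(V_1,V_2)$ exactly once (rather than double counting) is where care is needed, but no genuinely hard step arises.
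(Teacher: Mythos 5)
Your proposal is correct and follows essentially the same route as the paper's own argument: the identical chain starting from $\sum_{v\in V_1}\bigl(d_{V_1}(v)-\frac{14}{3}\bigr)$, lower-bounded via condition (2), with the cross edges $|E(V_1,V_2)|$ absorbed through $\sum_{v\in V_2}d_{V_1}(v)=|E(V_1,V_2)|=\sum_{v\in V_1}d_{V_2}(v)$ to upgrade induced degrees to full degrees, then weight conservation and condition (1) to conclude nonnegativity, hence ${\rm mad}(H)\geq{\rm mad}(H[V_1])\geq\frac{14}{3}$. Your explicit identification of the edge-counting identities merely makes transparent what the paper leaves implicit in its displayed inequalities.
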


In other words, the vertices in $V_2$ can be seen but, in a way, do not contribute to the sum analysis.
\bigskip

In order to finish the proof of Theorem~\ref{th:summad}, it suffices to obtain a contradiction, \emph{e.g.} with the fact that ${\rm mad}(H)<\frac{14}{3}$,
implying that in fact no counterexample to its thesis may exist.
By Observation~\ref{obs1v}, it is thus enough to
redistribute the weight (defined by $\omega$ above) in $H$
so that every vertex of degree at least $3$ has a non-negative resulting weight, every vertex of degree $2$ has weight at least $2-\frac{14}{3}+2$ and every vertex of degree $1$ has weight at least $1-\frac{14}{3}+1$.

\medskip

The discharging rules we shall use for this aim are defined as follows:

\begin{enumerate}
\item[(R1)] A vertex of degree $d\geq 6$ gives $1$ to every adjacent $1$-vertex and to every adjacent $2$-vertex.
\item[(R2)] A vertex of degree $d\geq 6$ gives $\frac59$ to every adjacent $3$-vertex.
\item[(R3)] A vertex of degree $d\geq 5$
gives $\frac{1}{6}$ to every adjacent $4$-vertex.
\end{enumerate}

Let $v$ be a vertex in $H$. We consider different cases depending on the degree of $v$.

\begin{itemize}
\item Assume $d(v)=1$. By $(C1)$, $v$ is adjacent to a vertex of degree at least $6$. Thus, by $(R1)$, $v$ receives $1$. So every vertex of degree $1$ in $H$ has an initial weight of $-\frac{11}{3}$, gives nothing according to our rules and receives $1$, hence has the final weight of $-\frac{8}{3}$.

\item Assume $d(v)=2$. By $(C1)$, $v$ is adjacent to two vertices of degree at least $6$, and thus receives $2 \times 1$ by $(R1)$ and gives away nothing according to the rules above. So every vertex of degree $2$ in $H$ has an initial weight of $-\frac{8}{3}$ and has the final weight of $-\frac{2}{3}$.

\item Assume $d(v)=3$. By $(C3)$, $v$ is adjacent to three vertices of degree at least $6$, and thus receives $3 \times \frac59$ by $(R2)$ and gives away nothing according to the rules above. So every vertex of degree $3$ in $H$ has 
    the final weight $0$.

\item Assume $d(v)=4$. By $(C2)$, $v$ is adjacent to four vertices of degree at least $5$, and thus receives $4 \times \frac16$ by $(R3)$ and gives away nothing according to the rules above. So every vertex of degree $4$ in $H$ has 
    the final weight $0$.

\item Assume $d(v)=5$. By $(C3)$, $v$ is not adjacent to a vertex with degree at most $3$, and by $(C4)$, $v$ is adjacent to at most two vertices of degree $4$, and thus gives at most $2 \times \frac16$ by $(R3)$. Hence, $\omega'(v) \ge 5-\frac{14}{3}-2 \times \frac16=0$. 

\item Assume $d(v)=6$.
Then consider the following subcases:
\begin{itemize}
\item if $v$ is adjacent to a $3^-$-vertex, then by $(C5)$, $v$ is adjacent to five vertices of degree at least $5$. Hence, by $(R1)$ and $(R2)$, $\omega'(v) \geq 6-\frac{14}{3}-1 \times \max \{1 ; \frac59\}=\frac13 \ge 0$.
\item if $v$ is not adjacent to a $3^-$-vertex, then $v$ is adjacent to at most six $4$-vertices. Hence, by $(R3)$, $\omega'(v) \ge 6-\frac{14}{3}-6 \times \frac16=\frac13 \ge 0$.
In both cases $v$ has a non-negative final weight.

\end{itemize}

\item Assume $d(v) \ge 7$. Recall that by Corollary~\ref{obs2v}, $n_{2^-}(v) \le d(v)-5$. Then consider the following subcases:
\begin{itemize}
\item If $n_{2^-}(v) = d(v)-5$ then by $(C6)$ and $(C7)$, $v$ is not adjacent to another $4^-$-vertex. Hence, by $(R1)$, $\omega'(v) \ge d(v)-\frac{14}{3}-(d(v)-5)\times 1 \ge 0$.
\item If $n_{2^-}(v) = d(v)-6$ then either $v$ is adjacent to no $3$-vertex or, by $(C6)$ and $(C7)$, 
to exactly
one $3$-vertex and to no $4$-vertices. Hence, by $(R1)$, $(R2)$ and $(R3)$, $\omega'(v) \ge d(v)-\frac{14}{3}-(d(v)-6)\times 1 - \max\{6\times \frac16;1\times \frac59\} \ge 0$.

\item If $n_{2^-}(v) = d(v)-7$ then $v$ is adjacent to at most three $3$-vertices;
for $d(v)\geq 8$ it follows by $(C7)$, while for $d(v)=7$ by Lemma~\ref{lemma4v}, which implies then that
$n_{4^+}(v) \ge 1+ n_{3^-}(v)(k - d(v))\geq 1+n_{3^-}(v)$.
Hence,  by $(R1)$, $(R2)$ and $(R3)$, $\omega'(v) \ge d(v)-\frac{14}{3}-(d(v)-7)\times 1-3\times \frac59 - 4 \times \frac16 \ge 0$.

\item If $n_{2^-}(v) \le d(v)-8$ then by $(C8)$: 

\begin{itemize}
\item if $v$ is not adjacent to any $4$-vertex then $v$ is adjacent to 
$(d(v)-8-\alpha)$ $2^-$-vertices and to at most $(\alpha+6)$ $3$-vertices for some $\alpha \ge 0$.  Hence, by $(R1)$, $(R2)$ and $(R3)$, $\omega'(v) \ge d(v)-\frac{14}{3}-(d(v)-8-\alpha)\times 1-(\alpha+6)\times \frac59=\frac49 \alpha \ge 0$. 

\item if $v$ is adjacent to at least one $4$-vertex then $v$ is adjacent to 
$(d(v)-8-\alpha)$ $2^-$-vertices, 
to 
$(\alpha+6-\beta)$ $3$-vertices and to at most $(\beta+2)$ $4$-vertices for some $\alpha \ge 0$ and some $\beta \ge 1$. Hence, by $(R1)$, $(R2)$ and $(R3)$, $\omega'(v) \ge d(v)-\frac{14}{3}-(d(v)-8-\alpha)\times 1-(\alpha+6-\beta)\times \frac59 - (\beta+2)\times \frac16=\frac49 \alpha + \frac{7}{18}\beta - \frac13 \ge 0$. 
\end{itemize}

\end{itemize}

\end{itemize}

In all cases $v$ has a non-negative final weight.

This, by Observation~\ref{obs1v}, completes the proof of Theorem~\ref{th:summad}. $\blacksquare$


\begin{thebibliography}{99}
%


\bibitem{Aigner}
M. Aigner, E Triesch, Irregular assignments of trees and
forests., SIAM J. Discrete Math. 3 (1990) 439--449.

\bibitem{Alon}
N. Alon, Combinatorial Nullstellensatz, Combin. Probab.
Comput. 8 (1999) 7--29.
%

\bibitem{Behzad}
M. Behzad, \emph{Graphs and Their Chromatic Numbers}, Ph.D. Thesis, Michigan State University (1965).


\bibitem{BonamyEtAl}
M. Bonamy, N. Bousquet, H. Hocquard,
Adjacent vertex-distinguishing edge coloring of graphs, Proc. of The Seventh European Conference on Combinatorics, Graph Theory and Applications (2013) 313--318.


\bibitem{BP14}
M. Bonamy, J. Przyby{\l}o,
On the Neighbor Sum Distinguishing Index of Planar Graphs, J. Graph Theory 85(3) (2017) 669--690.

\bibitem{ChartrandErdosOellermann}
G. Chartrand, P. Erd\H{o}s, O.R. Oellermann, How to Define an Irregular Graph,
College Math. J. 19(1) (1988) 36--42.

\bibitem{Chartrand}
G. Chartrand, M.S. Jacobson, J. Lehel, O.R. Oellermann, S. Ruiz, F. Saba,
Irregular networks,
Congr. Numer. 64 (1988) 197--210.


\bibitem{Coh10}
N. Cohen, Several Graph problems and their Linear Program formulations, (2010).


\bibitem{Lazebnik}
B. Cuckler, F. Lazebnik, Irregularity Strength of Dense
Graphs, J. Graph Theory 58(4) (2008) 299--313.

\bibitem{DongWang_mad}
A. Dong, G. Wang, Neighbor sum distinguishing total colorings of graphs with bounded maximum average degree, Acta Matematica Sinica 30(4) (2014), 703--709.





\bibitem{Frieze}
A. Frieze, R.J. Gould, M. Karo\'nski, F. Pfender, On Graph
Irregularity Strength, J. Graph Theory 41(2) (2002)
120--137.


%

\bibitem{HocquardPrzybylo1}
H. Hocquard, J. Przyby{\l}o, On the neighbour sum distinguishing index of graphs with bounded maximum average degree,
Graphs and Combinatorics 33(6) (2017) 1459--1471.

%

\bibitem{Toft}
T.R. Jensen, B. Toft, Choosability versus chromaticity, Geombinatorics 5 (1995) 45--64.

\bibitem{KalKarPf}
M. Kalkowski, M. Karo\'nski, F. Pfender, A new upper bound for the irregularity strength of graphs, SIAM J. Discrete Math. 25(3) (2011) 1319--1321.



\bibitem{Lehel}
J. Lehel, Facts and quests on degree irregular assignments,
Graph Theory, Combinatorics and Applications, Willey, New York, 1991, 765--782.

\bibitem{total_sum_planar}
H. Li, L. Ding, B. Liu, G. Wang, Neighbor sum distinguishing total colorings of planar
graphs, J. Comb. Optim. 30(3) (2015) 675--688.

\bibitem{LiLiuWang_sum_total_K_4}
H. Li, B. Liu, G. Wang, Neighbor sum distinguishing total colorings of $K_4$-minor free graphs,
Frontiers of Mathematics in China 8(6) (2013) 1351--1366.

\bibitem{MolloyReedTotal}
M. Molloy, B. Reed, \emph{A bound on the Total Chromatic Number}, Combinatorica 18 (1998) 241--280.

\bibitem{Przybylo_asym_optim_total}
S. Loeb, J. Przyby{\l}o, Y. Tang, Asymptotically optimal neighbor sum distinguishing total colorings
of graphs, Discrete Math. 340(2) (2017) 58--62.


\bibitem{MajerskiPrzybylo2}
P. Majerski, J. Przyby{\l}o, On the irregularity strength of dense graphs,
SIAM J. Discrete Math. 28(1) (2014) 197--205.

\bibitem{Nierhoff}
T. Nierhoff, A tight bound on the irregularity strength of
graphs, SIAM J. Discrete Math. 13 (2000) 313--323.


\bibitem{PW}
M. Pil\'{s}niak, M. Wo\'{z}niak,
On the Total-Neighbor-Distinguishing Index by Sums, Graphs Combin. 31(3) (2015) 771--782.

\bibitem{Przybylo_asymptotic_note}
J. Przyby{\l}o, A note on asymptotically optimal neighbour sum distinguishing colourings, submitted.



\bibitem{Przybylo_CN_3}
J. Przyby{\l}o, Neighbour distinguishing total colourings via the Combinatorial Nullstellensatz, Discrete Appl. Math. 
202 (2016) 163--173.






\bibitem{Qiu_Wang_Liu_Xu}
B. Qiu, J. Wang, Y. Liu, Z. Xu, Neighbor sum distinguishing total colorings of graphs with bounded maximum degree and maximum average degree, CSE 2017 IEEE Conference Publications 1 (2017) 898--901, http://ieeexplore.ieee.org/document/8005933.

\bibitem{Vizing2}
V. Vizing, \emph{Some Unsolved Problems in Graph Theory}, Russian Math Surveys 23 (1968) 125--141.


%

\bibitem{total_sum_planar2}
D. Yang, L. Sun, X. Yu, J. Wu, S. Zhou, Neighbor sum distinguishing total chromatic number of planar graphs with maximum degree 10,
Applied Mathematics and Computation 314 (2017) 456--468.

%
\end{thebibliography}
\end{document}